\documentclass[a4paper,11pt]{article}

\usepackage[T1]{fontenc}
\usepackage{lmodern}

\usepackage{dsfont}

\usepackage[latin1]{inputenc}
\usepackage{amsmath}
\usepackage{amsthm}
\usepackage{amssymb}
\usepackage{mathrsfs}
\usepackage{graphicx}
\usepackage[all]{xy}
\usepackage{hyperref}

\usepackage{makeidx}

\usepackage{stmaryrd}
\usepackage{caption}

\usepackage{abstract} 

\usepackage{cleveref}

\usepackage{xcolor}

\newtheorem{thm}{Theorem}[section]
\newtheorem{cor}[thm]{Corollary}
\newtheorem{claim}[thm]{Claim}

\newtheorem{lemma}[thm]{Lemma}
\newtheorem{prop}[thm]{Proposition}

\theoremstyle{definition}
\newtheorem{definition}[thm]{Definition}

\def\rquotient#1#2{%
	\makeatletter
	\raise.3ex\hbox{$#1$}/\lower.3ex\hbox{$#2$}%
	\makeatother
}	

\makeatletter
\newcommand{\subjclass}[2][2010]{%
	\let\@oldtitle\@title%
	\gdef\@title{\@oldtitle\footnotetext{#1 \emph{Mathematics subject classification.} #2}}%
}
\newcommand{\keywords}[1]{%
	\let\@@oldtitle\@title%
	\gdef\@title{\@@oldtitle\footnotetext{\emph{Key words and phrases.} #1.}}%
}
\makeatother

\newcommand{\Address}{{
		\bigskip
		\small
		
\noindent\textsc{UCLouvain\\ 
Institut de recherche en Math\'ematiques et physique\\
 Chemin du Cyclotron 2\\
1348 Louvain-la-Neuve (Belgium)}\par\nopagebreak
\noindent\textit{E-mail address}: \texttt{oussama.bensaid@uclouvain.be}
  
  	\bigskip
		\small
		
\noindent\textsc{University of Montpellier\\ 
Institut Math\'ematiques Alexander Grothendieck\\
Place Eug\`ene Bataillon\\
34090 Montpellier (France)}\par\nopagebreak
\noindent\textit{E-mail address}: \texttt{anthony.genevois@umontpellier.fr}
  
    \bigskip
		\small
		
\noindent\textsc{University of Paris-Cit\'e\\ 
Institut de Math\'ematiques de Jussieu-Paris Rive Gauche\\
Place Aur\'elie Nemours\\
75013 Paris (France)}\par\nopagebreak
\noindent\textit{E-mail address}: \texttt{romain.tessera@imj-prg.fr}		
}}

\makeindex

\title{Virtual splittings of right-angled Artin groups}
\date{\today}
\author{Oussama Bensaid, Anthony Genevois, and Romain Tessera}

\subjclass{Primary 20F65. Secondary 20F69.}
\keywords{Right-angled Artin groups, coarse separation, splittings}

\begin{document}

\maketitle

\begin{abstract}
In this article, we determine, given a finite graph $\Gamma$ and an integer $n \geq 1$, when a right-angled Artin group $A(\Gamma)$ virtually splits over an abelian subgroup of rank $n$. More precisely, we show that the following assertions are equivalent: (1) $A(\Gamma)$ admits $\mathbb{Z}^n$ as a codimension-one subgroup, (2) $A(\Gamma)$ virtually splits over $\mathbb{Z}^n$, (3) $A(\Gamma)$ splits over $\mathbb{Z}^n$, and (4) $\Gamma$ either is a complete graph with $n+1$ vertices or contains a complete subgraph of size $n$ that has a subgraph separating $\Gamma$. Our result improves the main result of \cite{MR3954281}. 
\end{abstract}

\tableofcontents

\section{Introduction}

\noindent
Given a graph $\Gamma$, the \emph{right-angled Artin group} $A(\Gamma)$ is given by the presentation
$$\langle u \text{ vertex of } \Gamma \mid [u,v]=1 \text{ whenever $u$ and $v$ are adjacent in $\Gamma$} \rangle.$$
Right-angled Artin groups interpolate between free groups (when ``nothing commutes'', i.e.\ $\Gamma$ has no edges) and free abelian groups (when ``everything commutes'', i.e.\ $\Gamma$ is a complete graph). {Despite the early appearance of similar interpolations for other algebraic structures, such as monoids and algebras (see for instance \cite{MR239978, MR575780, Bergman}), as well as a few mentions of the groups in some articles \cite{MR300879, MR567067, MR629331}, the study of right-angled Artin groups  started in earnest with the work of Baudisch \cite{MR463300, MR634562} and Droms \cite{MR2633165, MR880971, MR891135, MR910401}.} Since then, many articles have been dedicated to right-angled Artin groups from various perspectives. In addition to being an instructive source of examples, right-angled Artin groups also led to interesting applications. Two major contributions in this direction include the Morse theory developed in \cite{MR1465330} and the theory of special cube complexes introduced in \cite{MR2377497}. 

\medskip \noindent
Despite the fact that right-angled Artin groups have been extensively studied, a few very natural questions remain completely open:
\begin{description}
	\item[(Embedding Problem)] Given two graphs $\Gamma_1$ and $\Gamma_2$, when is $A(\Gamma_1)$ isomorphic to a subgroup of $A(\Gamma_2)$?
	\item[(Commensurability Problem)] Given two graphs $\Gamma_1$ and $\Gamma_2$, when are $A(\Gamma_1)$ and $A(\Gamma_2$ (abstractly) commensurable, i.e.\ when do they share isomorphic finite-index subgroups?
	\item[(Quasi-isometric Problem)] Given two graphs $\Gamma_1$ and $\Gamma_2$, when are $A(\Gamma_1)$ and $A(\Gamma_2)$ quasi-isometric?
	\item[(Surface Subgroup Problem)] Given a graph $\Gamma$, when does $A(\Gamma)$ have a subgroup isomorphic to the fundamental group of a closed surface of genus $\geq 2$?
\end{description}
In this article, we are mainly interested in the Commensurability Problem, which is of course closely related to the Quasi-isometric Problem since commensurable finitely generated groups are automatically quasi-isometric. The Commensurability Problem has been solved only for a few families of graphs. An interesting playground is given by trees, because right-angled Artin groups defined by finite trees of diameter $\geq 3$ are known to be all quasi-isometric \cite{MR2376814}. So far, the Commensurability Problem has been solved only for trees of diameter $\leq 4$ \cite{MR3945733} and paths \cite{MR4243768}. Aside trees, notable examples are given by results of quasi-isometric rigidity \cite{MR2421136,HuangI,HuangII}, including right-angled Artin groups with finite outer automorphism groups. Several quasi-isometric and commensurability invariants are also worth mentioning, including divergence \cite{MR2874959, MR3073670}, cut-points in asymptotic cones \cite{MR2874959}, extension graphs for some graphs of large girth \cite[Corollary~8]{MR3192368}, Morse boundaries \cite{MR4563334}, and some homotopy types of simplicial complexes \cite{Bowtie}. 

\medskip \noindent
More precisely, we are interested in the following question: when does a right-angled Artin group virtually split over an abelian subgroup, i.e.\ when does it contain a finite-index subgroup that decomposes non-trivially as an amalgamated product or an HNN extension over an abelian subgroup? For splittings over trivial subgroups, i.e.\ free product decompositions, one knows from Stallings' theorem that admitting such a splitting is preserved by quasi-isometry, and in particular by commensurability, and it is known that a given right-angled Artin group splits non-trivially as a free product if and only if the graph that defines it is disconnected. For splittings over infinite cyclic subgroups, one also knows from \cite{MR2153400} that admitting such a splitting is preserved by quasi-isometry, and in particular by commensurability, and \cite{MR3278386} shows that a given right-angled Artin group splits non-trivially over an infinite cyclic group if and only if the graph that defines it has a cut-vertex. For splittings over abelian subgroups of arbitrary rank, it follows from \cite{CoarseRAAG} that, among finitely generated right-angled Artin groups, admitting such a splitting is again preserved by quasi-isometry, and in particular by commensurability, which happens precisely when the graph defining the right-angled Artin group under consideration has a separating complete subgraph \cite{MR3728497}. But what about virtual splittings over an abelian subgroup with a fixed rank $\geq 2$? 

\medskip \noindent
\begin{minipage}{0.3\linewidth}
\begin{center}
\includegraphics[width=0.8\linewidth]{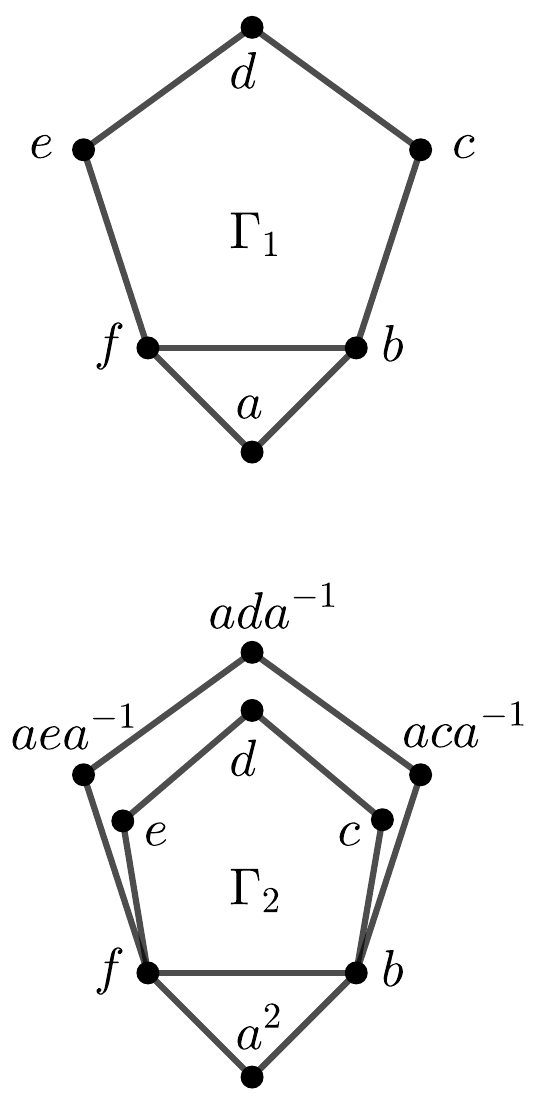}
\end{center}
\end{minipage}
\begin{minipage}{0.68\linewidth}
It is worth mentioning that the naive guess claiming that a right-angled Artin group $A(\Gamma)$ virtually splits over an abelian group of rank $n$ if and only if $\Gamma$ can be separated by a complete subgraph of size $n$ does not hold. For instance, if $P_2$ denotes the path of length two, then $A(P_2) \simeq \mathbb{F}_2 \times \mathbb{Z}$ (virtually) splits over $\mathbb{Z}^2$, since $\mathbb{F}_2$ splits over $\mathbb{Z}$, despite the fact that removing an edge (with its endpoints) cannot disconnected $P_2$. As another example, let $\Gamma_1$ and $\Gamma_2$ be the two graphs illustrated on the left. Clearly, $\Gamma_1$ does not contain a separating complete subgraph of size $3$, but $\Gamma_2$ does. Nevertheless, $A(\Gamma_1)$ contains $A(\Gamma_2)$ as a finite-index subgroup (namely, $\langle a^2,b,c,d,e,aba^{-1},aca^{-1},ada^{-1},aea^{-1}\rangle$), and the latter clearly splits over $\mathbb{Z}^3$. In fact, it can be shown that $A(\Gamma_1)$ does not only virtually splits over $\mathbb{Z}^3$, but also splits over $\mathbb{Z}^3$ directly (see Lemma~\ref{lem:NonObviousSplit}). 
\end{minipage}

\medskip \noindent
The first work to investigate virtual splittings over abelian subgroups is \cite{MR3954281}, where it is proved that, if a right-angled Artin group $A(\Gamma)$ virtually splits over an abelian subgroup of rank $n$, then $\Gamma$ must be separated by some complete subgraph with $\leq n$ vertices. The main result of our article is the following characterisation:

\begin{thm}\label{thm:BigIntro}
Let $\Gamma$ be a finite graph and $n \geq 0$ an integer. The following assertions are equivalent:
\begin{itemize}
	\item[(i)] $A(\Gamma)$ contains a coarsely separating abelian subgroup of rank $n$;
	\item[(ii)] $A(\Gamma)$ virtually splits over an abelian subgroup of rank $n$;
	\item[(iii)] $A(\Gamma)$ splits over an abelian subgroup of rank~$n$;
	\item[(iv)] $\Gamma$ either is a complete graph with $n+1$ vertices or contains a complete subgraph of size $n$ that has a subgraph separating~$\Gamma$.
\end{itemize}
\end{thm}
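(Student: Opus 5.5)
We prove the cycle of implications (iv) $\Rightarrow$ (iii) $\Rightarrow$ (ii) $\Rightarrow$ (i) $\Rightarrow$ (iv). The implication (iii) $\Rightarrow$ (ii) is trivial. For (ii) $\Rightarrow$ (i), we use the standard fact that the edge group of a non-trivial splitting of a finite-index subgroup $H$ is coarsely separating in $H$: the Bass--Serre tree yields a decomposition of $H$ into two deep complementary pieces near the edge group. Since $H$ is quasi-isometric to $A(\Gamma)$ via the inclusion, the abelian subgroup of rank $n$ is coarsely separating in $A(\Gamma)$ as well.

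For (iv) $\Rightarrow$ (iii), we distinguish cases.
\begin{itemize}
\item If $\Gamma$ is complete on $n+1$ vertices, then $A(\Gamma)=\mathbb{Z}^{n+1}=\mathbb{Z}^n \ast_{\mathbb{Z}^n}$ is an HNN extension with trivial twisting.
\item Otherwise, let $\Lambda \subset \Gamma$ be complete of size $n$, and let $\Lambda_0\subseteq \Lambda$ separate $\Gamma$. Write $\Gamma\setminus\Lambda_0 = C_1\sqcup C_2$ with no edges between $C_1$ and $C_2$. Then $A(\Gamma)$ splits as $A(C_1\cup\Lambda_0)\ast_{A(\Lambda_0)}A(C_2\cup\Lambda_0)$.
\end{itemize}
In the second case we need to enlarge the edge group from rank $|\Lambda_0|$ to rank $n$, as in Lemma~\ref{lem:NonObviousSplit}. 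The vertices of $\Lambda\setminus\Lambda_0$ lie in, say, $C_1\cup C_2$. Vertices on the same side can be absorbed into the corresponding factor. A vertex $w$ on the opposite side commutes with $A(\Lambda_0)$, and we use it to pass to a splitting over $A(\Lambda_0)\times\langle w\rangle$, or over a twisted version such as $\langle \Lambda_0, w x\rangle$. Concretely, we would try to realise $A(\Gamma)$ as a graph of groups whose edge group is $A(\Lambda)$, or a conjugate-twisted copy of $\mathbb{Z}^n$, by iterating the trick of Lemma~\ref{lem:NonObviousSplit}, which I would assume handles the increase of rank by one at a time. Alternatively, we could first build a splitting with edge group $A(\Lambda_0)$ and then apply a general lemma: if $G=A\ast_C B$ and $z\in B$ centralises $C$ with $\langle C,z\rangle\cong C\times\mathbb{Z}$, then $G$ also splits over $\langle C,z\rangle$, via the decomposition $A\ast_C\langle C,z\rangle\ast_{\langle C,z\rangle}B$. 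When $z\notin$ a proper position, we must check that this splitting is non-trivial, and that is where separation and the choice of $z$ enter.

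The main obstacle is (i) $\Rightarrow$ (iv). We start from a coarsely separating $\mathbb{Z}^n\le A(\Gamma)$. Using the results of \cite{CoarseRAAG}/\cite{MR3954281}, we may assume after conjugation that the subgroup is contained in, or coarsely equivalent to, a subgroup of a maximal product $A(\Xi)$ for a join subgraph, and that it coarsely separates only if its "support" does. Any abelian subgroup of $A(\Gamma)$ is, up to finite index and conjugation, generated by elements with pairwise commuting supports. Write $\mathbb{Z}^n$ as virtually $\langle g_1,\dots,g_n\rangle$ with $g_i$ having pairwise disjoint and joined supports $S_i$.

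The next step is a coarse-separation criterion. We would show that $\langle g_1,\dots,g_n\rangle$ coarsely separates $A(\Gamma)$ only if either $\Gamma$ is complete with $n+1$ vertices (the case of a codimension-one flat in $\mathbb{Z}^{n+1}$), or $\Gamma$ is separated by a subgraph of $\bigcup_i S_i$ that is contained in a complete subgraph of size $n$. The idea is to pick one vertex $v_i$ in each support. If some $S_i$ has two or more vertices, then $g_i$ is "free-like" and does not contribute to separation, so one may replace it with a vertex, using separation of the link.

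For this I would use the divergence/separation machinery for cube complexes and hyperplanes (coarse separation by $\mathbb{Z}^n$ forces separation of the Salvetti universal cover by a subcomplex that is quasi-flat of dimension $n$, hence by a union of hyperplane carriers). Extracting the combinatorial separating subgraph $\Lambda_0\subseteq\Lambda$, with $|\Lambda|=n$, from the coarse data is the step I expect to be hardest.
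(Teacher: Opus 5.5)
Your implications (iii) $\Rightarrow$ (ii) $\Rightarrow$ (i) are fine. The genuine gap is (i) $\Rightarrow$ (iv), which you do not prove. What you write is essentially a restatement of the goal plus heuristics, and the one concrete reduction you state is false. An abelian subgroup need not be virtually generated by elements with pairwise disjoint supports: $\langle ab,bc\rangle\le A(K_3)=\mathbb{Z}^3$ is a counterexample, and it is moreover coarsely separating.

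The missing mechanism is the one the paper uses. First decompose $\Gamma$ along complete cuts (Proposition~\ref{prop:CCD}). Then $A(\Gamma)$ is a tree of spaces whose edge groups are $\langle E\rangle$ for complete cuts $E$, and whose vertex groups are defined by graphs without complete cuts. By Proposition~\ref{prop:SepTreeSpaces}, one of two things happens:
\begin{itemize}
\item $C$ coarsely contains some $g\langle E\rangle$;
\item $C$ coarsely separates a vertex space, which by Theorem~\ref{thm:AbSepRAAG} must be free abelian.
\end{itemize}
In the second case, growth and Euclidean arguments (Theorem~\ref{thm:SepEucGrowth}, Lemmas~\ref{lem:SepTreeSpaces} and~\ref{lem:CoarseSepEuclidean}) either conclude directly or again force an edge space $gh\langle E\rangle$ into a neighbourhood of $C$.

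The decisive step, for which your hyperplane/divergence sketch offers no substitute, turns this coarse containment into the combinatorial condition (iv). Lemma~\ref{lem:GeomCont} shows that $g\langle E\rangle g^{-1}$ is virtually contained in $C\cong\mathbb{Z}^n$. Proposition~\ref{prop:ArtinSubAbelian} (the commensurator of $\langle E\rangle$ is $\langle \mathrm{star}(E)\rangle$, plus a rank count via cohomological dimension) then shows that the complete cut $E$ lies in a complete subgraph with $n$ vertices. This is exactly where ``a complete subgraph of size $n$ having a separating subgraph'' comes from.

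For (iv) $\Rightarrow$ (iii), your refinement $A\ast_C B=(A\ast_C D)\ast_D B$ for $C\le D\le B$ is a good tool, but you stop precisely where an idea is needed. Since $\Lambda$ is complete, all of $\Lambda\setminus\Lambda_0$ lies on one side, say in $\Gamma_0=C_1\cup\Lambda_0$. There is no vertex of $\Lambda$ ``on the opposite side''.
\begin{itemize}
\item If $\Lambda\subsetneq\Gamma_0$, take $D=\langle\Lambda\rangle$; equivalently, $\Lambda$ itself separates $\Gamma$.
\item If $\Gamma_0=\Lambda\supsetneq\Lambda_0$, then $\langle\Lambda\rangle$ is the whole vertex group and your splitting is trivial. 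The fix is to take the index-two subgroup $D=\langle w^2,\Lambda\setminus\{w\}\rangle\cong\mathbb{Z}^n$ for some $w\in\Lambda\setminus\Lambda_0$.
\end{itemize}
This fix is exactly Lemma~\ref{lem:NonObviousSplit} applied once to $w$, for which $\mathrm{star}(w)=\Lambda\neq\Gamma$. It is not an iteration raising the rank one at a time.
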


\noindent
Here, given a finitely generated group $G$, a subgroup $H \leq G$ is \emph{coarsely separating} if some neighbourhood of $H$ in (a Cayley graph of) $G$ has at least two deep connected components (i.e.\ two components with points arbitrarily far away from $H$). Examples of coarsely separating include codimension-one subgroups, i.e.\ subgroups with $\geq 2$ relative ends (as defined in \cite{MR357679}). In fact, since a subgroup on which there is a splitting always has codimension-one, the conditions $(i)-(iv)$ from Theorem~\ref{thm:BigIntro} are also equivalent to $A(\Gamma)$ containing a codimension-one abelian subgroup of rank $n$. (Actually, it turns out that an abelian subgroup is coarsely separating if and only if it is virtually a codimension-one subgroup. We refer the reader to \cite{CodimCoarseSep} and references therein for more information on the similarities and differences between codimension-one and coarsely separating subgroups.) 

\medskip \noindent
As an immediate consequence of Theorem~\ref{thm:BigIntro}, one obtains the following commensurability invariant:

\begin{cor}
Let $\Gamma_1$ and $\Gamma_2$ be two finite graphs such that $A(\Gamma_1)$ and $A(\Gamma_2)$ are commensurable. If $\Gamma_1$ contains a complete subgraph of size $n$ that has a subgraph separating $\Gamma_1$, then $\Gamma_2$ must also contain such a subgraph. 
\end{cor}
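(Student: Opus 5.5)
The corollary should follow directly from Theorem~\ref{thm:BigIntro}, using the fact that virtual splitting over an abelian subgroup of a given rank is a commensurability invariant. I will work through the equivalence (ii)$\Leftrightarrow$(iv), or equivalently (i)$\Leftrightarrow$(iv), since coarse separation is even a quasi-isometry invariant.

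Suppose $\Gamma_1$ contains a complete subgraph of size $n$ that has a subgraph separating $\Gamma_1$. Then (iv) holds for $\Gamma_1$, so by Theorem~\ref{thm:BigIntro} the group $A(\Gamma_1)$ virtually splits over an abelian subgroup of rank $n$. Concretely, some finite-index subgroup $H_1 \leq A(\Gamma_1)$ splits non-trivially over a subgroup $C$ with $C$ virtually $\mathbb{Z}^n$. Since right-angled Artin groups are torsion-free, $C \simeq \mathbb{Z}^n$.

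Commensurability gives finite-index subgroups $K_1 \leq A(\Gamma_1)$ and $K_2 \leq A(\Gamma_2)$ with $K_1 \simeq K_2$. The key step is to transfer the splitting to $K_1$. The intersection $H_1 \cap K_1$ has finite index in $H_1$, so it acts on the Bass--Serre tree $T$ of the splitting of $H_1$. This action has no global fixed point: a finite-index subgroup of a group acting without a global fixed point still acts without one, because a finite orbit of vertices in a tree yields a fixed point, and a finite-index subgroup fixing a point gives a bounded $H_1$-orbit. It is also cocompact. Hence $H_1\cap K_1$ splits non-trivially as a graph of groups whose edge groups are finite-index subgroups of conjugates of $C$, so they are again $\mathbb{Z}^n$. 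Transport this splitting through the isomorphism $K_1\simeq K_2$. Then $K_2$, and therefore $A(\Gamma_2)$, virtually splits over an abelian subgroup of rank $n$.

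Applying (ii)$\Rightarrow$(iv) of Theorem~\ref{thm:BigIntro} to $\Gamma_2$, one of two things holds: $\Gamma_2$ contains a complete subgraph of size $n$ with a separating subgraph, or $\Gamma_2$ is complete on $n+1$ vertices. The only real obstacle is ruling out the second alternative. In that case $A(\Gamma_2)\simeq\mathbb{Z}^{n+1}$, and since $A(\Gamma_1)$ is commensurable to it, $A(\Gamma_1)$ is virtually $\mathbb{Z}^{n+1}$. Abelianization and cohomological dimension then force $\Gamma_1$ to be complete on $n+1$ vertices. A complete graph has no separating subgraph, because removing any subset of vertices leaves a complete, hence connected or empty, graph. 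This contradicts the hypothesis on $\Gamma_1$, so $\Gamma_2$ contains the required subgraph.
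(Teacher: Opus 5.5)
Your proof is correct and follows essentially the paper's route: the paper states the corollary as an immediate consequence of Theorem~\ref{thm:BigIntro} together with the commensurability invariance of virtually splitting over $\mathbb{Z}^n$. You verify that invariance properly by restricting the Bass--Serre action to $H_1\cap K_1$; collapsing to an edge orbit that meets the axis of a hyperbolic element then gives the one-edge splitting the definition requires. Your extra step rules out the alternative that $\Gamma_2$ is complete on $n+1$ vertices, since this would force $\Gamma_1$ to be complete on $n+1$ vertices and so to have no separating subgraph; the paper leaves this point implicit.
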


\noindent
Notice that, since the condition (iv) from Theorem~\ref{thm:BigIntro} implies that our graph $\Gamma$ must contain a separating complete subgraph with $\leq n$ vertices, one recovers the main result of \cite{MR3954281} from Theorem~\ref{thm:BigIntro}.

\subsection*{Acknowledgements}
The first-named author acknowledges support from the FWO and F.R.S.-FNRS under the Excellence of Science (EOS) programme (project ID 40007542).

\section{Right-angled Artin groups}

\noindent
This section is dedicated to the following observation about right-angled Artin groups, which will play an important role in the proof of Theorem~\ref{thm:BigIntro}. 

\begin{prop}\label{prop:ArtinSubAbelian}
Let $\Gamma$ be a graph and $\Lambda \leq \Gamma$ a complete subgraph of size $m$. If $\langle \Lambda \rangle$ is virtually contained in an abelian group of rank $n \geq m$, then $\Lambda$ is contained in a complete subgraph of size $n$. 
\end{prop}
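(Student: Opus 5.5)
The plan is to place any abelian group virtually containing $\langle \Lambda \rangle$ inside the centraliser of a finite-index subgroup of $\langle \Lambda \rangle$. This centraliser is again a right-angled Artin group of a special form, and we can then count ranks in it. Fix an abelian subgroup $A$ of rank $n$ and a finite-index subgroup $H \leq \langle \Lambda \rangle$ with $H \leq A$. Replacing $H$ by a smaller finite-index subgroup, we may assume $H = \langle u^k \mid u \in \Lambda \rangle$ for some $k \geq 1$. Since $A$ is abelian, $A$ lies in the centraliser $C(H) = \bigcap_{u \in \Lambda} C(u^k)$.

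\textbf{Step 1: compute $C(H)$.} I would invoke Servatius' centraliser theorem: for a vertex $u$ and any $k \neq 0$, the centraliser of $u^k$ in $A(\Gamma)$ is $\langle \mathrm{star}(u) \rangle$. Intersecting over $u \in \Lambda$, and using that an intersection of standard parabolic subgroups $\langle \Gamma_1 \rangle \cap \langle \Gamma_2 \rangle$ equals $\langle \Gamma_1 \cap \Gamma_2 \rangle$, we get
$$C(H) = \Big\langle \bigcap_{u\in\Lambda} \mathrm{star}(u) \Big\rangle = \langle \Lambda \rangle \times \langle L \rangle \simeq \mathbb{Z}^m \times A(L),$$
where $L$ is the set of vertices outside $\Lambda$ adjacent to every vertex of $\Lambda$. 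Here we use that $\Lambda$ is complete, so that $\bigcap_{u\in\Lambda}\mathrm{star}(u)$ is exactly $\Lambda \sqcup L$.

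\textbf{Step 2: rank count.} Project $A \leq \mathbb{Z}^m \times A(L)$ onto the factor $A(L)$. The kernel of this projection is $A \cap \mathbb{Z}^m$, which has rank at most $m$. Hence the image is an abelian subgroup of $A(L)$ of rank at least $n-m$. It is torsion-free because $A(L)$ is, so it is free abelian of rank at least $n-m$.

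\textbf{Step 3: abelian subgroups of $A(L)$.} It remains to see that a free abelian subgroup of rank $r$ in $A(L)$ forces a complete subgraph of size $r$ in $L$. One way is cohomological dimension: the Salvetti complex shows that $\mathrm{cd}\, A(L)$ equals the clique number of $L$, and a $\mathbb{Z}^r$ subgroup has cohomological dimension $r \leq \mathrm{cd}\, A(L)$. So $L$ contains a clique $K$ with $|K| \geq n-m$. Every vertex of $L$ is adjacent to all of $\Lambda$, so $\Lambda \cup K$ is a complete subgraph with at least $n$ vertices containing $\Lambda$. Any $n$-vertex subset of it containing $\Lambda$ gives the required complete subgraph of size $n$.

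The main obstacle is Step 1. There I rely on the facts that centralisers of powers of generators equal centralisers of the generators, and that intersections of standard parabolic subgroups are parabolic on the intersected subgraphs. Both are standard (Servatius; normal forms in right-angled Artin groups), but they should be cited carefully. Steps 2 and 3 are routine once $C(H)$ is identified.
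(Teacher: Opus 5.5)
Your proof is correct. It differs from the paper's only in how you place the abelian group inside $\langle \Lambda\rangle \times \langle \mathrm{link}(\Lambda)\rangle$.

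The paper observes that the abelian group commensurates $\langle\Lambda\rangle$. It then proves Lemma~\ref{lem:ArtinNormaliser}: the commensurator of a standard parabolic subgroup $\langle\Lambda\rangle$ equals its normaliser $\langle \mathrm{star}(\Lambda)\rangle$. That lemma relies on the results of \cite{MR3365774} on intersections of conjugates of parabolic subgroups and on their normalisers.

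You instead use that $A$ centralises the finite-index subgroup $\langle u^k \mid u \in \Lambda\rangle$. You compute this centraliser from Servatius' theorem $C(u^k)=\langle \mathrm{star}(u)\rangle$ and the fact $\langle \Gamma_1\rangle \cap \langle\Gamma_2\rangle = \langle \Gamma_1\cap\Gamma_2\rangle$, which follows from the retraction killing the vertices outside $\Gamma_1$. Since $\Lambda$ is complete, $\bigcap_{u\in\Lambda}\mathrm{star}(u)=\mathrm{star}(\Lambda)$. So you land in exactly the group the paper uses, and your Steps~2 and~3 coincide with the paper's projection argument and Lemma~\ref{lem:ArtinSubAbelian}.

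Your route is more elementary because it exploits that both $A$ and $\langle\Lambda\rangle$ are abelian, so the centraliser suffices. The paper's lemma is more general: it holds for arbitrary $\Lambda$ and any subgroup commensurating $\langle\Lambda\rangle$. That generality is not needed here.

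One small point in Step~2: torsion-freeness alone does not make the image free abelian. Use finite generation instead, either because $A\simeq\mathbb{Z}^n$ or because abelian subgroups of right-angled Artin groups are finitely generated. Alternatively, pick $n-m$ elements of the image that are independent over $\mathbb{Q}$; they generate a copy of $\mathbb{Z}^{n-m}$, which is all Step~3 needs.
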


\noindent
Here, given a graph $\Gamma$ and a subgraph $\Lambda \leq \Gamma$, we denote by: 
\begin{itemize}
	\item $\mathrm{link}(\Lambda)$ the subgrapb of $\Gamma$ induced by the vertices that are adjacent to all the vertices of $\Lambda$;
	\item $\mathrm{star}(\Lambda)$ the subgraph of $\Gamma$ induced by $\Lambda\cup \mathrm{link}(\Lambda)$;
	\item $\langle \Lambda \rangle$ the subgroup of $A(\Gamma)$ generated by the vertex-set $V(\Lambda)$ of $\Lambda$ (thought of as generators of $A(\Gamma)$). 
\end{itemize}
Our proposition will be an easy consequence of the next two elementary facts about right-angled Artin groups. 

\begin{lemma}\label{lem:ArtinNormaliser}
Let $\Gamma$ be a graph and $\Lambda \leq \Gamma$ a subgraph. The commensurator of $\langle \Lambda \rangle$ in $A(\Gamma)$ coincides with its normaliser, namely $\langle \mathrm{star}(\Lambda) \rangle$.
\end{lemma}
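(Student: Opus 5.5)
The plan is to prove the chain of inclusions
$$\langle \mathrm{star}(\Lambda)\rangle \subseteq N(\langle \Lambda \rangle) \subseteq \mathrm{Comm}(\langle \Lambda\rangle) \subseteq \langle \mathrm{star}(\Lambda)\rangle.$$
Here $\Lambda$ is regarded as an induced subgraph, as is implicit in the notation $\langle \Lambda \rangle$.

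The first two inclusions are immediate. Every vertex of $\Lambda$ lies in $\langle \Lambda\rangle$, and every vertex of $\mathrm{link}(\Lambda)$ commutes with all generators of $\langle \Lambda \rangle$. Hence $\langle \mathrm{star}(\Lambda)\rangle$ normalises $\langle \Lambda \rangle$. A normaliser is always contained in the commensurator.

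For the third inclusion, fix $g \in \mathrm{Comm}(\langle \Lambda \rangle)$. Since the commensurator is a subgroup containing $\langle \mathrm{star}(\Lambda)\rangle$, we may replace $g$ by a shortest element of the double coset $\langle \mathrm{star}(\Lambda)\rangle\, g\, \langle \mathrm{star}(\Lambda)\rangle$. It then suffices to show this shortest representative is trivial. By minimality and the standard normal form theory for right-angled Artin groups (reduced words, shuffling commuting letters, cancellation only between a letter and an inverse letter that can be shuffled next to it), no reduced word for $g$ can begin or end with a letter from $\mathrm{star}(\Lambda)$. In other words, no letter of $\mathrm{star}(\Lambda)^{\pm1}$ can be shuffled to the front or to the back of $g$.

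Suppose, aiming at a contradiction, that $g \neq 1$. Let $z = \prod_{v \in V(\Lambda)} v$, taken in some fixed order. Since $g$ commensurates $\langle \Lambda\rangle$, some power $g z^k g^{-1}$ lies in $\langle \Lambda \rangle$. The key claim is that the concatenation of a reduced word for $g$, the word $z^k$, and a reduced word for $g^{-1}$ is already reduced.

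To justify the claim, note that a cancellation would require either a letter of $g$ to shuffle past the end of $g$ and cancel with a letter of $z^k$, or a letter of $g$ to shuffle all the way through $z^k$ and cancel with a letter of $g^{-1}$. In the first case, the cancelled letter would be a vertex of $\Lambda$ occurring at the end of $g$, which is excluded by minimality. In the second case, the letter would have to commute with every vertex of $\Lambda$, so it would lie in $\mathrm{star}(\Lambda)$ and would be a terminal letter of $g$, again excluded.

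Granting the claim, since $g \neq 1$ and $g \notin \langle \mathrm{star}(\Lambda)\rangle$, some letter of $g$ lies outside $\Lambda$. That letter therefore survives in a reduced word for $g z^k g^{-1}$. But every reduced word representing an element of $\langle \Lambda\rangle$ uses only letters of $\Lambda$. This contradiction shows $g = 1$, so the original commensurating element lies in $\langle \mathrm{star}(\Lambda)\rangle$.

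The main obstacle is making the reducedness claim rigorous. I would use the standard criterion that a word in a right-angled Artin group is reduced if and only if no two letters $v^{\pm1}$ and $v^{\mp1}$ are separated only by letters commuting with $v$. The case analysis above then becomes a check of this criterion. Double-coset minimality on both sides is exactly what excludes the two dangerous configurations.
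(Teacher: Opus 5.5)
Your argument is correct, but it takes a different route from the paper's.

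\medskip\noindent
\textbf{The paper's route.} The paper first proves that the commensurator equals the normaliser, using the intersection result \cite[Proposition~3.4]{MR3365774}: $g\langle\Lambda\rangle g^{-1}\cap\langle\Lambda\rangle=h\langle\Xi\rangle h^{-1}$ with $h\in\langle\Lambda\rangle$ and $\Xi\leq\Lambda$. Passing to the abelianisation $A(\Gamma)\to\mathbb{Z}^{V(\Gamma)}$, the finite-index condition forces $\Xi=\Lambda$. Hence $\langle\Lambda\rangle\leq g\langle\Lambda\rangle g^{-1}$, and applying the same argument to $g^{-1}$ gives equality. It then simply cites \cite[Proposition~3.13]{MR3365774} to identify the normaliser with $\langle\mathrm{star}(\Lambda)\rangle$.

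\medskip\noindent
\textbf{Your route.} You close the chain $\langle\mathrm{star}(\Lambda)\rangle\subseteq N\subseteq\mathrm{Comm}\subseteq\langle\mathrm{star}(\Lambda)\rangle$ directly, using a double-coset-minimal representative and normal forms. The normaliser computation therefore comes out as a by-product rather than being an input. You only need the cancellation criterion for reduced words, plus the fact that reduced words for the same element use the same letters. Your argument also uses much less than commensuration: only that $gz^kg^{-1}\in\langle\Lambda\rangle$ for the single element $z=\prod_{v\in V(\Lambda)}v$ and some $k\geq 1$, applied to the minimal representative.

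\medskip\noindent
\textbf{Trade-off.} The paper's proof is shorter and rests on general structural results about parabolic subgroups. Yours is self-contained and elementary, at the cost of the cancellation bookkeeping.

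\medskip\noindent
\textbf{Points to tighten when writing it up.}
\begin{itemize}
\item Treat the symmetric configuration explicitly: a letter of $z^k$ cancelling against a letter $a_i^{-1}$ of $g^{-1}$. It again means $a_i\in\Lambda^{\pm1}$ commutes with $a_{i+1},\dots,a_m$, so $g$ would end with a letter of $\Lambda^{\pm1}$, which minimality excludes.
\item Choose $k\geq 1$, which is always possible since a finite-index subgroup contains a positive power of each element. This guarantees that $z^k$ contains every vertex of $\Lambda$, which is what you use in the $g$-versus-$g^{-1}$ case.
\end{itemize}
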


\begin{proof}
Let $g \in \mathrm{Comm}(\langle \Lambda \rangle)$, i.e.\ $g \langle \Lambda \rangle g^{-1} \cap \langle \Lambda \rangle$ has finite index in $\langle \Lambda \rangle$. We know from \cite[Proposition~3.4]{MR3365774} that there exist $h \in \langle \Lambda \rangle$ and $\Xi \leq \Lambda$ such that $g \langle \Lambda \rangle g^{-1} \cap \langle \Lambda \rangle= h \langle \Xi \rangle h^{-1}$. Under the abelianisation map $A(\Gamma) \to \mathbb{Z}^{V(\Gamma)}$, $h \langle \Xi \rangle h^{-1}$ is sent to $\mathbb{Z}^{V(\Xi)}$, which must have finite index in the image $\mathbb{Z}^{V(\Lambda)}$ of $\langle \Lambda \rangle$. The only possibility is that $\Xi = \Lambda$. Hence
$$g \langle \Lambda \rangle g^{-1} \cap \langle \Lambda \rangle  = h \langle \Xi \rangle h^{-1}= h \langle \Lambda \rangle h^{-1}= \langle \Lambda \rangle,$$
i.e.\ $\langle \Lambda \rangle \leq g \langle \Lambda \rangle g^{-1}$. By applying the same argument to $g^{-1} \in \mathrm{Comm}(\langle \Lambda \rangle)$ shows that $\langle \Lambda \rangle \leq g^{-1} \langle \Lambda \rangle g$, or equivalently that $g \langle \Lambda \rangle g^{-1} \leq  \langle \Lambda \rangle$. We conclude that $g \langle \Lambda \rangle g^{-1}= \langle \Lambda \rangle$, i.e.\ $g$ normalises $\langle \Lambda \rangle$. 

\medskip \noindent
Thus, we have proved that the commensurator of $\langle \Lambda \rangle$ in $A(\Gamma)$ coincides with its normaliser. The description of this normaliser is given by \cite[Proposition~3.13]{MR3365774}.
\end{proof}

\begin{lemma}\label{lem:ArtinSubAbelian}
Let $\Gamma$ be a graph and $n \geq 0$ an integer. The right-angled Artin group $A(\Gamma)$ contains an abelian subgroup of rank $n$ if and only if $\Gamma$ contains a complete subgraph of size $n$. 
\end{lemma}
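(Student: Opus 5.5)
One direction is immediate. If $\Gamma$ contains a complete subgraph $\Lambda$ with $n$ vertices, then the defining relations force the generators $V(\Lambda)$ to pairwise commute. Hence $\langle \Lambda \rangle$ is a quotient of $\mathbb{Z}^n$. Its image under the abelianisation map $A(\Gamma) \to \mathbb{Z}^{V(\Gamma)}$ is the coordinate subgroup $\mathbb{Z}^{V(\Lambda)}$, which is free abelian of rank $n$. So $\langle \Lambda \rangle \simeq \mathbb{Z}^n$, and this gives the desired abelian subgroup of rank $n$. (Here "rank" means torsion-free rank; $A(\Gamma)$ is torsion-free, so abelian subgroups of finite rank are free abelian when finitely generated.)

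For the converse I would argue by induction on the number of vertices of $\Gamma$, using the standard structure of centralisers in right-angled Artin groups. Let $H \leq A(\Gamma)$ be abelian of rank $n \geq 1$, and replace it by a free abelian subgroup $\mathbb{Z}^n$. Pick a non-trivial element $g \in H$. After conjugating, which does not affect the conclusion, we may assume $g$ is cyclically reduced. Let $\Delta$ be its support, i.e.\ the set of vertices appearing in a reduced word for $g$. Decompose $\Delta$ as the join of its irreducible factors, $\Delta = \Delta_1 * \cdots * \Delta_k$, with $g = g_1 \cdots g_k$ and $g_i \in \langle \Delta_i \rangle$. Servatius' centraliser theorem then gives
$$C(g) = \langle g_1' \rangle \times \cdots \times \langle g_k' \rangle \times \langle \mathrm{link}(\Delta) \rangle,$$
where each $g_i'$ is a root of $g_i$. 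Since $H \leq C(g)$, $H$ embeds in $\mathbb{Z}^k \times \langle \mathrm{link}(\Delta) \rangle$. Projecting onto the second factor, whose kernel $\mathbb{Z}^k$ has rank $k$, we see that $\langle \mathrm{link}(\Delta)\rangle$ contains an abelian subgroup of rank at least $n-k$.

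Now $\mathrm{link}(\Delta)$ is a proper subgraph of $\Gamma$, since it misses $\Delta \neq \emptyset$, and $\langle \mathrm{link}(\Delta)\rangle = A(\mathrm{link}(\Delta))$ because it is a standard parabolic subgroup. By induction, $\mathrm{link}(\Delta)$ contains a complete subgraph $K$ of size $n-k$. Choosing one vertex $v_i \in \Delta_i$ for each $i$ gives $k$ pairwise adjacent vertices, because the $\Delta_i$ are join factors. Each $v_i$ is also adjacent to every vertex of $K \subseteq \mathrm{link}(\Delta)$. So $\{v_1,\dots,v_k\} \cup K$ spans a complete subgraph of size $n$. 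The base case of the induction ($n=0$ or $\Gamma$ empty) is trivial.

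The main obstacle is invoking the centraliser theorem correctly. In particular, one must check that the rank bookkeeping through the projection works: the kernel of the projection of $H$ is a subgroup of $\mathbb{Z}^k$, so it has rank at most $k$, and the image therefore has rank at least $n-k$. An alternative route would avoid centralisers altogether: $A(\Gamma)$ acts on the universal cover of its Salvetti complex, a CAT(0) cube complex whose dimension equals the clique number of $\Gamma$. By the flat torus theorem, a $\mathbb{Z}^n$ subgroup yields an isometrically embedded flat of dimension $n$, which forces the cube complex to have dimension at least $n$. I would mention this as a backup argument.
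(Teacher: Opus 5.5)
Your proof is correct, but for the converse you take a different route from the paper. The paper argues in one line via cohomological dimension. If $\Gamma$ has no complete subgraph with $n$ vertices, the Salvetti complex has dimension at most $n-1$, because it is a $K(A(\Gamma),1)$ with one $k$-cell for each complete subgraph on $k$ vertices. Hence $\mathrm{cd}\,A(\Gamma)\le n-1$. Since cohomological dimension cannot increase when passing to a subgroup and $\mathrm{cd}\,\mathbb{Z}^n=n$, no abelian subgroup of rank $n$ exists.

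You instead induct on the number of vertices using Servatius' centraliser theorem. Your rank bookkeeping is sound: $H\cap \mathbb{Z}^k$ has rank at most $k$, so the projection of $H$ to $A(\mathrm{link}(\Delta))$ is free abelian of rank at least $n-k$. Building the clique from one vertex in each join factor of $\Delta$ plus a clique of $\mathrm{link}(\Delta)$ is also sound. Your route produces the clique explicitly and uses no homological algebra, but it relies on a deeper structural result about centralisers.

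It also needs two small repairs. First, when $k\ge n$ the inductive step is vacuous; you should just note that $v_1,\dots,v_k$ already span a clique of size at least $n$. Second, induction on the number of vertices presupposes $\Gamma$ finite, whereas the lemma is stated for an arbitrary graph. This is fixed by observing that the finitely generated group $\mathbb{Z}^n$ lies in $\langle \Gamma' \rangle \simeq A(\Gamma')$ for some finite induced subgraph $\Gamma'$. The paper's cohomological-dimension argument needs no such reduction.

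Your backup flat-torus argument rests on the same input as the paper's: the universal cover of the Salvetti complex has dimension equal to the clique number. The paper's version, however, avoids both the flat torus theorem and the dimension-theoretic fact that an $n$-flat cannot embed in a complex of dimension less than $n$.
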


\begin{proof}
If $\Gamma$ contains a complete subgraph $\Lambda$ with $n$ vertices, then $\langle \Lambda \rangle$ provides an abelian subgroup of rank $n$ of $A(\Gamma)$. Conversely, if $\Gamma$ does not contain a complete subgraph with $n$ vertices, then its Salvetti complex has dimension $\leq n-1$, which implies that $A(\Gamma)$ has cohomological dimension $\leq n-1$, and consequently cannot contain an abelian subgroup of rank $n$. 
\end{proof}

\begin{proof}[Proof of Proposition~\ref{prop:ArtinSubAbelian}.]
By assumption, there exists a monomorphism $\mathbb{Z}^n \hookrightarrow \mathrm{Comm}(\langle \Lambda \rangle)$. According to Lemma~\ref{lem:ArtinNormaliser}, the commensurator of $\langle \Lambda \rangle$ is $\langle \mathrm{star}(\Lambda) \rangle$. Thus, we find a morphism
$$\varphi : \mathbb{Z}^n \hookrightarrow \mathrm{Comm}(\langle \Lambda \rangle) = \langle \Lambda \rangle \oplus \langle \mathrm{link}(\Lambda) \rangle \twoheadrightarrow \langle \mathrm{link}(\Lambda) \rangle,$$
where the second arrow is the canonical projection. Since the kernel of $\varphi$ has rank at most $|V(\Lambda)| = m$, its image must be a free abelian group of rank $\geq n-m$. It follows from Lemma~\ref{lem:ArtinSubAbelian} that $\mathrm{link}(\Lambda)$ contains a complete subgraph $\Xi$ of size $n-m$. We conclude, as desired, that $\Lambda$ is contained in a complete subgraph of size $n$, namely the subgraph induced by $\Lambda \cup \Xi$. 
\end{proof}

\section{Complete cuts}

\noindent
Splittings of a right-angled Artin group $A(\Gamma)$ over abelian subgroups are closely related to separating complete subgraphs of $\Gamma$. For convenience, we introduce the following definition:

\begin{definition}
Let $X$ be a graph. A \emph{complete cut} is a complete subgraph $Y \leq X$ such that $X \backslash Y$ is disconnected.
\end{definition}

\noindent
A key ingredient in the proof of Theorem~\ref{thm:BigIntro} is that, given a right-angled Artin group $A(\Gamma)$, it is possible to decompose $\Gamma$ along complete cuts in order to construct a convenient decomposition of $A(\Gamma)$ as a graph of groups with abelian-edge-groups. The following definition records the decomposition of $\Gamma$ that will be relevant for us:

\begin{definition}
A \emph{complete-cut-decomposition} $(T,(V_s)_{s \in V(T)})$ of a graph $X$ is the data of a tree $T$ and a collection of induced subgraphs $V_s \leq X$ indexed by $V(T)$ such that:
\begin{itemize}
	\item $\{V_s \mid s \in V(T)\}$ covers $X$, i.e.\ $E(X)= \bigcup_{s \in V(T)} E(V_s)$;
	\item for every $s \in V(T)$, $V_s$ has no complete cut;
	\item for all adjacent $r,s \in V(T)$, $V_r \cap V_s$ is a complete cut of $X$ properly contained in both $V_r$ and $V_s$.
\end{itemize}
\end{definition}

\noindent
We conclude this section by proving that every finite graph admits such a nice decomposition. 

\begin{prop}\label{prop:CCD}
Every finite graph admits a complete-cut-decomposition.
\end{prop}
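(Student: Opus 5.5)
Since $X$ is finite, I argue by strong induction on the number of vertices of $X$. In the base case $X$ has no complete cut, and the one-vertex tree $T$ with $V_s = X$ works: the covering condition is immediate and the third condition is vacuous. (If $X$ is disconnected, then the empty subgraph is a complete cut, and the same inductive step below treats it as a cut of size $0$.)

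For the inductive step, suppose $X$ has a complete cut. I fix a \emph{minimal} one, $Y$, meaning that no proper complete subgraph of $Y$ separates $X$. Let $C_1,\ldots,C_k$ ($k\ge 2$) be the components of $X\backslash Y$, and set $X_i$ to be the subgraph induced by $C_i\cup Y$. Each $X_i$ has fewer vertices than $X$, so by induction it admits a complete-cut-decomposition $(T_i,(V^i_s))$. Every edge of $X$ lies in some $X_i$, because no edge joins $C_i$ to $C_j$ for $i\neq j$. Hence the pieces $V^i_s$ together cover $X$.

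The key observations concern how complete cuts behave under this splitting.
\begin{itemize}
\item A complete cut $Z$ of $X_i$ is a complete cut of $X$. Since $Y$ is complete, $Y\backslash Z$ lies in a single component of $X_i\backslash Z$. Some other component $D$ of $X_i\backslash Z$ is therefore contained in $C_i$ and is not adjacent to $Y\backslash Z$. So $D$ remains a component of $X\backslash Z$, and $X\backslash Z$ contains vertices outside $D$ (for instance in $C_j$, or in $Y\backslash Z$), so $Z$ separates $X$.
\item The complete subgraph $Y$ lies in a single piece of each $T_i$. I expect this to be the main obstacle. The standard argument is a Helly-type property: the pieces containing a given vertex form a subtree, and pairwise-intersecting subtrees of a tree share a vertex. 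The subtree property needs some care, because the definition only asks that the pieces cover the edges. One option is to strengthen the induction hypothesis to include the subtree property, i.e.\ ask for a tree decomposition in the usual sense. Another is to argue directly: every clique of $X_i$ lies in a piece not separated by any $V_r\cap V_s$, using that a clique cannot be split by a cut.
\end{itemize}

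Given a piece $s_i\in V(T_i)$ with $Y\subseteq V^i_{s_i}$, I glue the trees $T_1,\ldots,T_k$ by adding edges $s_1s_i$ for $i\ge 2$. The adjacency condition then has to be checked on the new edges. First, $V^1_{s_1}\cap V^i_{s_i} = Y$, since $X_1\cap X_i = Y$. This is a complete cut of $X$. It is proper in both pieces: if $V^i_{s_i}$ were equal to $Y$, then $Y$, being complete, would have to be properly contained in a neighbouring piece, or $X_i = Y$, which is impossible since $C_i\neq\emptyset$. In the former case I can contract and choose that neighbouring piece instead.

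Second, I need that each $V^i_s$ has no complete cut; this is inherited directly. Old intersections are complete cuts of $X_i$, hence of $X$ by the first observation. Finally, the glued graph is a tree, since I attached the $T_i$ along single edges to $s_1$.

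If the Helly step resists, my fallback is an alternative construction that avoids it. I would choose the decomposition of $X_i$ by treating $Y$ as a clique that must be kept whole, which works if the induction statement is strengthened to: ``for any given clique $K$, there is a decomposition with $K$ contained in some piece.''
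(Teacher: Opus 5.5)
Your strategy is the paper's: induct on the number of vertices, cut along a minimal complete cut, decompose each side, find a piece properly containing the cut, and glue. Splitting into all components with a star-gluing instead of into two sides, and using inclusion-minimality instead of minimum size, are harmless variants. Your first bullet is exactly the paper's argument that old intersections remain complete cuts of $X$.

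There is a genuine gap in the properness step, which is wrong as written. Suppose $V^i_{s_i}=Y$ and $t$ is a neighbour of $s_i$ in $T_i$. The axioms give $V^i_{s_i}\cap V^i_t\subsetneq V^i_{s_i}=Y$, so $Y$ is \emph{not} contained in any neighbouring piece, and there is nothing to contract. The missing idea is to use the minimality of $Y$ at this point, as the paper does. The intersection $V^i_{s_i}\cap V^i_t$ is a complete cut of $X_i$, hence of $X$ by your first bullet, and it is a proper subgraph of $Y$, which is a contradiction. So $V^i_{s_i}=Y$ would force $T_i$ to be a single vertex and $X_i=Y$, which is impossible.

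You chose $Y$ minimal but never used it, and minimality really is indispensable. Let $X$ be a triangle $xyz$ with pendant vertices $x',y',z'$ attached to $x,y,z$, and let $Y$ be the triangle. Each $X_i$ is then a triangle with one pendant vertex. Any piece containing $Y$ must equal $Y$, because the only larger induced subgraph is $X_i$, which has a cut vertex. So every new edge of your glued tree joins two pieces both equal to $Y$, violating properness.

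For the Helly step, your first option is the right one, and it is necessary rather than a convenience: the definition as stated does not force a clique into a single piece. In the same graph $X$, take the path of pieces $\{x',x\},\{x,y\},\{y,z\},\{z,z'\},\{x,z\}$ and attach $\{y,y'\}$ to $\{x,y\}$. This satisfies all three axioms: each intersection of adjacent pieces is a single cut vertex of $X$, and each piece is an edge. Yet no piece contains the triangle, so your ``direct'' option cannot succeed.

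Instead, put the subtree property into the induction hypothesis: for each vertex $v$, the $s$ with $v\in V_s$ span a subtree. Then check that your gluing preserves it. A vertex of $C_i$ occurs only in pieces of $T_i$. For $v\in Y$, the subtrees in the various $T_j$ all contain $s_j$ and are joined through the new edges $s_1s_j$. With this hypothesis, the usual Helly argument produces $s_i$ with $Y\subseteq V^i_{s_i}$. The paper's Claim, a halfspace argument in $T_i$, tacitly relies on this same property, although it is not written into the definition.
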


\begin{proof}
Let $X$ be a finite graph. If $X$ has no complete cut, then $(\{\mathrm{pt}\}, (X))$ is a (trivial) complete-cut-decomposition of $X$. Otherwise, fixing be a complete cut $C$ of minimal size, we decompose $X$ as the union of two subgraphs containing $C$ properly, say $X_1$ and $X_2$, amalgamated along $C$. Arguing by induction on the number of vertices, we know that, for $i=1,2$, $X_i$ admits a complete-cut-decomposition $(T_i, (V_s^i)_{s \in V(T)})$. 

\begin{claim}\label{claim:CCD}
For $i=1,2$, there exists $o(i) \in V(X_i)$ such that $C \subsetneq V_{o(i)}^i$. 
\end{claim}

\noindent
Fix an $i=1,2$. If the complete-cut-decomposition of $X_i$ is trivial, i.e.\ $T_i$ is reduced to a single vertex $p$ and $V_p=X_i$, then it suffices to set $o(i):=p$. From now on, we assume that the complete cut-decomposition of $X_i$ is non-trivial.

\medskip \noindent
Given an edge $e:=\{r,t\}$ of $T_i$, if $D_1$ and $D_2$ denote the two halfspaces of $T_i$ delimited by $e$ and if 
$$C \subset \left( \bigcup\limits_{s \in V(D_1)} V_s^i \right) \cap \left( \bigcup\limits_{s \in V(D_2)} V_s^i \right),$$
then $C \subset V_r^i \cap V_t^i \subsetneq V_r^i$. In this case, we can set $o(i):=r$. So, from now on, we assume that each edge $e$ of $T_i$ delimits a unique halfspace $D_e$ such that $C \subset \bigcup_{s \in V(D_s)} V_s^i$. The intersection
$$D:= \bigcap\limits_{e \text{ edge of } T_i} D_e$$
is either empty or a single vertex. In the former case, i.e.\ $D=\emptyset$, there must exist two edges of $T_i$, say $e$ and $f$, such that $D_e \cap D_f= \emptyset$. In fact, up to replacing $e$ and $f$ with two consecutive edges along the geodesic connecting $e$ and $f$ in $T_i$, we can assume that $e$ and $f$ share an endpoint, say $q$. Then, if $p$ (resp.\ $r$) denotes the other endpoint of $e$ (resp.\ $f$), we have
$$C \subset \left( \bigcap\limits_{s \in V(D_e)} V_s^i \right) \cap \left( \bigcap\limits_{s \in V(D_f)} V_s^i \right) \subset V_p \cap V_r \subsetneq V_q.$$
Thus, we can set $o(i):=q$ in this case. From now on, assume that $D$ is reduced to a single vertex, say $r$. Then, $C \subset V_r$. Notice that, because we know that the complete-cut-decomposition of $X_i$ is non-trivial, $r$ must have a neighbour in $T_i$, say $t$. It follows that, if $C=V_r$, then $V_r \cap V_t$ is a complete cut of $X$ that is smaller than $C$, contradicting our choice of $C$. Thus, $C \subsetneq V_r$, so we can set $o(i):=r$. This concludes the proof of Claim~\ref{claim:CCD}.  

\medskip \noindent
Thanks to Claim~\ref{claim:CCD}, we can combine the complete-cut-decompositions of $X_1$ and $X_2$ into a decomposition of $X$. For this, let $T$ denote the tree obtained from the disjoint union of $T_1$ and $T_2$ by connecting $o(1)$ and $o(2)$ with an edge; for every vertex $s \in V(T)$, we set $V_s:=V_s^1$ if $s \in V(T_1)$ and $V_s^2$ if $s \in V(T_2)$. We claim that $(T, (V_s)_{s \in V(T)})$ is a complete-cut-decomposition of $X$.

\medskip \noindent
First, it is clear that $\{V_s \mid s \in V(T)\}$ covers $X$. Indeed, every edge of $X$ is an edge of $X_1$ (and consequently of some $V_s^1$) or an edge of $x_2$ (and consequently of some $V_s^2$). Next, it is clear that each $V_s$ has no complete cut, since we already know that this is true for the $V_s^1$ and the $V_s^2$. It is also clear that, given two adjacent vertices $r,s \in V(T)$, $V_r \cap V_s \subsetneq V_r$. Indeed, we already know that this is true if $r,s \in V(T_1)$ or if $r,s \in V(T_2)$, and the only remaining case is $\{r,s\} = \{o(1), o(2)\}$, but we know that $V_{o(1)} \cap V_{o(2)} = C$ is properly contained in both $V_{o(1)}^1$ and $V_{o(2)}^2$ (as a consequence of our choice for $o(1)$ and $o(2)$, see Claim~\ref{claim:CCD}). 

\medskip \noindent
In order to conclude the proof of our proposition, it remains to verify that, given two adjacent vertices $r,s \in V(T)$, $V_r \cap V_s$ is a complete cut of $X$. If $\{r,s\}=\{o(1),o(2)\}$, then $V_r\cap V_s =C$ is a complete cut of $X$. Otherwise, $r,s \in V(T_i)$ for some $i=1,2$. We know that $V_r \cap V_s$ is a complete cut of $X_i$, so we can decompose $X_i$ as a union of subgraphs containing $V_r \cap V_s$ properly, say $A_1, \ldots, A_n$ ($n \geq 2$), amalgamated along $V_r \cap V_s$. Necessarily, $C$ must be contained in some $A_j$, say $A_1$ up to reindexing our subgraphs. Then $V_r \cap V_s$ separates $A_2 \backslash (V_r \cap V_s)$ and $X_2 \backslash C$ in $X$, proving that $V_r \cap V_s$ is a complete cut of $X$, as desired. 
\end{proof}

\section{Coarse separation}

\noindent
In this section, we record basic definitions and properties about coarse separation that will be useful for the proof of Theorem~\ref{thm:BigIntro}. We start by recalling the definition of coarse separation used in \cite{CoarseSep} (restricted to connected graphs for simplicity).

\begin{definition}
Let $X$ be a connected graph and $\mathcal{Z}$ a collection of subgraphs. A connected subgraph $Y \subset X$ is \emph{coarsely separated by $\mathcal{Z}$} if there exists $L \geq 0$ such that, for every $D \geq 0$, there is some $Z \in \mathcal{Z}$ such that $Y \setminus Z^{+L}$ has at least two connected components with points at distance $\geq D$ from $Z$. 
\end{definition}

\noindent
Understanding coarse separation in Euclidean spaces will be important in order to prove Theorem~\ref{thm:BigIntro}. We start by recording two elementary observations.

\begin{lemma}\label{lem:CodimAb}
For every $n \geq 1$, a subgroup of $\mathbb{Z}^n$ is coarsely separating if and only if its rank is $n-1$. 
\end{lemma}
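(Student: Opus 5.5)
Let $H \leq \mathbb{Z}^n$ be a subgroup of rank $k$. Since $H$ and its finite-index overgroups are at finite Hausdorff distance, coarse separation depends only on the rational span $V = H \otimes \mathbb{R} \subset \mathbb{R}^n$. Moreover, $\mathbb{Z}^n$ is quasi-isometric to $\mathbb{R}^n$ via a map sending $H$ to within bounded distance of $V$. So I would reduce the statement to the following: a linear subspace $V \leq \mathbb{R}^n$ of dimension $k$ coarsely separates $\mathbb{R}^n$ if and only if $k = n-1$. Here the collection $\mathcal{Z}$ is $\{H\}$, or equivalently all translates of $H$; translates change nothing by homogeneity.

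\emph{Case $k=n-1$.} After a linear change of coordinates, which is bi-Lipschitz, $V$ is a hyperplane $\{x_n=0\}$. For any $L$, the complement of $V^{+L}$ is $\{x_n > L\} \sqcup \{x_n<-L\}$. These are two connected components, each containing points arbitrarily far from $V$. In $\mathbb{Z}^n$ one argues the same way using a primitive linear form $\phi$ vanishing on $H$. Vertices with $\phi > C$ and with $\phi < -C$ lie outside the neighbourhood. Any path between them crosses values of $\phi$ near $0$, since each edge changes $\phi$ by a bounded amount, and such points lie within bounded distance of $H$, because $\ker\phi \cap \mathbb{Z}^n$ contains $H$ with finite index. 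The sets $\{\phi>C\}\cap\mathbb{Z}^n$ are connected, as one can move along $\ker\phi$ directions and increase $\phi$. This gives two deep components.

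\emph{Case $k \le n-2$.} I would show that for every $L$ and $D$, all points at distance $\ge D'$ from $V$ (for a suitable $D' \ge D$) lie in a single component of the complement of $V^{+L}$. Write $\mathbb{R}^n = V \oplus V^\perp$ with $\dim V^\perp \ge 2$. The complement of $V^{+L}$ is then $V \times \{w \in V^\perp : |w| > L\}$. The second factor is connected, being a sphere shell complement in dimension $\ge 2$, so the whole complement is connected. To transfer this to the Cayley graph I would use the same picture with a coarse path: connect two far points by a path staying at distance $\ge L + c$ from $V$ in $\mathbb{R}^n$, then discretise it into a lattice path. The $1$-Lipschitz change in distance to $V$ means the lattice path also avoids the $L$-neighbourhood of $H$. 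In fact the complement is coarsely connected outright, so there is at most one unbounded component. Components without far points are irrelevant.

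\emph{Case $k=n$.} Every point is within bounded distance of $H$, so nothing is deep and $H$ is not separating.

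The main obstacle is the careful discretisation between $\mathbb{R}^n$ and the Cayley graph of $\mathbb{Z}^n$, where distances to $H$ and to $V$ differ by a bounded constant. I would handle this uniformly by inflating $L$ by that constant.
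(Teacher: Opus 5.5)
Your proposal is correct and follows essentially the same route as the paper: quasi-density for rank $n$, a two-sided cut for rank $n-1$, and reduction to a linear subspace of codimension $\geq 2$ in $\mathbb{R}^n$ for rank $\leq n-2$. Your primitive form $\phi$ is an explicit version of the paper's map onto the two-ended quotient $\mathbb{Z}^n/H$.

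The one difference is that you actually justify the rank $\leq n-2$ case, via connectedness of $V \times \{w \in V^\perp : |w| > L\}$ followed by discretisation, whereas the paper only asserts it. Your aside that coarse connectedness of the complement gives at most one unbounded component is not a valid inference, but you do not need it, since you already show that all sufficiently deep points lie in a single component.
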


\begin{proof}
Let $H \leq \mathbb{Z}^{n}$ be a subgroup. First, assume that $H$ has rank $n$. In other words, $H$ has finite index in $\mathbb{Z}^n$. As a quasi-dense subspace, it cannot be coarsely separating. Next, assume that $H$ has rank $n-1$. Then, $\mathbb{Z}^n/H$ is virtually infinite cyclic, hence two-ended. The preimage under the quotient map $\mathbb{Z}^n \twoheadrightarrow \mathbb{Z}^n / H$ of a ball separating $\mathbb{Z}^n/H$ into at least two unbounded connected components yields a neighbourhood of $H$ that separates $\mathbb{Z}^n$ into at least two deep connected components (i.e.\ components with point arbitrarily far away from $H$). Thus, $H$ is coarsely separating. Finally, assume that $H$ has rank $\leq n-2$. Fix a basis $\{a_1, \ldots, a_k\}$ of $H$. Because coarse separation is preserved under quasi-isometry, $H$ coarsely separates $\mathbb{Z}^n$ if and only if the vector subspace $V$ spanned by $\{a_1,\ldots, a_k\}$ coarsely separates $\mathbb{R}^n$. But $V$ has dimension $\leq n-2$, so such a coarse separation cannot occur. Thus, $H$ is not coarsely separating. 
\end{proof}

\begin{lemma}\label{lem:CoarseSepEuclidean}
Let $n \geq 1$ be an integer and $Y$ a subspace of $\mathbb{Z}^n$ contained in $\mathbb{Z}^{n-1} \times \{0\}$. If $Y$ coarsely separates $\mathbb{Z}^n$, then it must be quasi-dense in $\mathbb{Z}^{n-1} \times \{0\}$. 
\end{lemma}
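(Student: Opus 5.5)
We argue by contraposition: assuming $Y \subset \mathbb{Z}^{n-1}\times\{0\}$ is not quasi-dense in $H:=\mathbb{Z}^{n-1}\times\{0\}$, we show that no neighbourhood $Y^{+L}$ can have two deep complementary components. Fix $L \geq 0$. Since $Y$ is not quasi-dense in $H$, for every $R$ there is a point $h_R \in H$ with $d(h_R, Y) \geq R$. Hence the ball $B(h_R, R-L)$ misses $Y^{+L}$. The idea is that this ``hole'' in $H$ lets every point of $\mathbb{Z}^n\setminus Y^{+L}$ that is far from $Y$ pass from the upper half-space $\{x_n>0\}$ to the lower one $\{x_n<0\}$ without meeting $Y^{+L}$.

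First, consider the upper region $U_L:=\{x \in \mathbb{Z}^n : x_n > L\}$ and the lower region $U_L^-:=\{x_n < -L\}$. Both are connected, and both are disjoint from $Y^{+L}$, because $Y^{+L}\subset \{|x_n|\leq L\}$. Second, any point $x\notin Y^{+L}$ with $d(x,Y)\geq D$ lies either in $U_L\cup U_L^-$, or in the slab $S=\{|x_n|\leq L\}$. In the latter case, with $D$ large compared to $L$, I would move $x$ vertically (changing only $x_n$) to height $L+1$. Every point on this vertical path has the same first $n-1$ coordinates and height at most $L+1$, so its distance to $Y$ is at least $D - 2L - 1 > L$. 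The path therefore avoids $Y^{+L}$, and the component of $x$ meets $U_L$ or $U_L^-$. Third, connect $U_L$ to $U_L^-$ by the vertical segment through $h_R$ with $R > 2L+2$. Every point $(h_R, t)$ with $|t|\leq L+1$ is at distance $\geq R-L-1 > L$ from $Y$, so this segment avoids $Y^{+L}$.

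Consequently, for $D \geq 3L+2$, all points of $\mathbb{Z}^n\setminus Y^{+L}$ at distance $\geq D$ from $Y$ lie in a single connected component. This contradicts coarse separation, since the definition (with the collection $\mathcal{Z}=\{Y\}$) demands two components containing points at distance $\geq D$ for every $D$. Note that the same $L$ works for all $D$, so the quantifiers line up: we fix $L$, then pick $D$ and $R$ depending on $L$.

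The main subtlety is bookkeeping with the metric. I will use the $\ell^1$ word metric on $\mathbb{Z}^n$, so that vertical moves change distances by exactly $1$ per step, and distances to $Y$ decompose as horizontal plus vertical displacement. The only real content is the existence of the hole $h_R$, which is exactly the negation of quasi-density. The case $n=1$ is degenerate and holds for the same reason: if $Y \subset \{0\}$ is empty, it separates nothing, and otherwise it is quasi-dense in $\{0\}$.
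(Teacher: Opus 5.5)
Your proof is correct and follows the paper's route: fix $L$, take a point of $\mathbb{Z}^{n-1}\times\{0\}$ far from $Y$ so that the vertical line through it avoids $Y^{+L}$ and joins the two half-spaces, and push every other point vertically out of the slab. The one difference is that the paper asserts outright that $\mathbb{Z}^n\setminus Y^{+L}$ is connected (true in the $\ell^1$ metric, since $d((y,t),Y)=d((y,0),Y)+|t|$), whereas you only connect the points at distance $\geq D$ using the triangle inequality. That weaker statement is all the definition requires, and it fills in what the paper leaves as ``clear''.
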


\begin{proof}
Assume that $Y$ is not quasi-dense in $\mathbb{Z}^{n-1} \times \{ 0\}$. Our goal is to show that, for every $L \geq 0$, $\mathbb{Z}^n\backslash Y^{+L}$ is connected. Notice that 
$$\bigcup\limits_{y \in (\mathbb{Z}^{n-1} \times \{0\})\backslash Y^{+L}} \{y\} \times \mathbb{Z}$$
is disjoint from $Y^{+L}$. Since $(\mathbb{Z}^{n-1} \times \{0\})\backslash Y^{+L}$ is non-empty, as $Y$ is not quasi-dense in $\mathbb{Z}^{n-1} \times \{0\}$, it clearly follows that $\mathbb{Z}^n \backslash Y^{+L}$ is connected, as desired. 
\end{proof}

\noindent
Our next statement is more interesting, and follows from \cite[Theorem~1.5]{CoarseSep}.

\begin{thm}[\cite{CoarseSep}]\label{thm:SepEucGrowth}
If $\mathbb{Z}^n$ is coarsely separated by some family $\mathcal{Z}$, then $\mathcal{Z}$ has growth at least polynomial of degree $n-1$. 
\end{thm}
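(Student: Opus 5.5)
The plan is to deduce the statement directly from \cite[Theorem~1.5]{CoarseSep}, which (as I read it) asserts that if a space of polynomial growth satisfying a Poincar\'e-type (relative isoperimetric) inequality of dimension $n$ is coarsely separated by a family $\mathcal{Z}$, then $\mathcal{Z}$ cannot have growth smaller than the codimension-one volume growth $r^{n-1}$. So the first step is to check the hypotheses for $\mathbb{Z}^n$, in whichever of its standard forms is needed:
\begin{itemize}
	\item it is a vertex-transitive graph with $|B(x,r)| \asymp r^n$;
	\item it satisfies the relative isoperimetric inequality in balls: if $A \subset B(x,r)$ with $|A|$ and $|B(x,r)\setminus A|$ both $\geq c\, r^n$, then the edge boundary of $A$ inside $B(x,\lambda r)$ has size $\gtrsim r^{n-1}$.
\end{itemize}
Since these are classical facts for $\mathbb{Z}^n$ (or for $\mathbb{R}^n$, to which it is quasi-isometric), the statement then follows, with the paper's notion of growth of a family matching the one in \cite{CoarseSep}.

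If one wants a self-contained argument, I would proceed as follows. Fix $L$ from the definition of coarse separation, take $D$ large and $Z \in \mathcal{Z}$ with two components $C_1, C_2$ of $\mathbb{Z}^n \setminus Z^{+L}$, each containing a point at distance $\geq D$ from $Z$. Since the distance to $Z$ is $1$-Lipschitz along paths, inside $C_i$ we can find a point $p_i$ at distance exactly about $D$ from $Z$. Then $B(p_i, D-L-1)$ avoids $Z^{+L}$, so it lies entirely in $C_i$, which gives a region of volume $\gtrsim D^n$ in each component. Now pick $z \in Z$ nearest to $p_1$ and consider $B(z,R)$ with $R$ a suitable multiple of $D$. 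Inside this ball, the edge boundary of $C_1 \cap B(z,R)$ lies in $Z^{+L+1}$. By the relative isoperimetric inequality, this boundary has size $\gtrsim R^{n-1}$, which forces $|Z \cap B(z,R+L+1)| \gtrsim R^{n-1}/|B(0,L+1)|$. That is the claimed growth of degree $n-1$ at scales tending to infinity.

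The main obstacle is the localisation step: guaranteeing that in a ball of radius comparable to $D$ around a point of $Z$, both $C_1$ and its complement occupy a definite proportion of the volume, since $p_2$ may be very far from $p_1$. I would first try a doubling/scale-selection argument. Starting from $R_0 = D$, the proportion of $B(z,R)$ occupied by $C_1$ is at least a fixed constant at $R_0$. Either the complement already occupies a definite proportion at some scale $R_k = 2^k D$ for which $C_1$ still does, which gives the desired ball, or $C_1$ fills almost all of every ball $B(z,R_k)$. In the latter case, letting $k$ grow until the ball reaches $B(p_2, D-L-1) \subset C_2$ yields a contradiction, since that ball lies in $C_2$ and therefore outside $C_1$. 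This is precisely the step handled in \cite{CoarseSep}, so in the paper it suffices to invoke \cite[Theorem~1.5]{CoarseSep} once the hypotheses are checked.
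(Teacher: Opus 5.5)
Your primary route, invoking \cite[Theorem~1.5]{CoarseSep} after checking that $\mathbb{Z}^n$ has volume growth of degree $n$ and satisfies a relative isoperimetric (Poincar\'e) inequality, is exactly what the paper does. The paper simply quotes the statement from \cite{CoarseSep} with no further argument. Your optional self-contained sketch, however, has a genuine gap at the localisation step, and your doubling argument does not close it.

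The concluding ``contradiction'' is not a contradiction. The ball $B(p_2,D-L-1)$ has volume $\asymp D^n$, which is only a proportion $\asymp (D/R_k)^n$ of $B(z,R_k)$. So when $d(z,p_2)\gg D$, the set $C_1$ may fill a proportion $\geq 1-c'$ of \emph{every} ball $B(z,R_k)$, including those containing $B(p_2,D-L-1)$. Here is a concrete case: let $Z$ consist of the sphere of radius $D$ about $p_2$ together with one isolated point $z$ very far from it, and let $p_1$ be at distance $D$ from $z$. Then
\begin{itemize}
\item $z$ is the point of $Z$ nearest to $p_1$;
\item every ball $B(z,R_k)$ is almost entirely contained in $C_1$;
\item balls of radius $\asymp D$ about $z$ meet $Z$ in a single point.
\end{itemize}
So no ball centred at your chosen $z$ detects the separating part of $Z$.

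The remedy is to keep the scale $D$ fixed and move the centre instead. Take a path from $p_1$ to $p_2$ and consider the ratio $|C_1\cap B(x,D)|/|B(x,D)|$ along it.
\begin{itemize}
\item It is close to $1$ at $p_1$ and close to $0$ at $p_2$.
\item It changes by $O(1/D)$ between adjacent vertices, since $|B(x,D)\triangle B(y,D)|=O(D^{n-1})$.
\end{itemize}
Hence it lies in $[1/3,2/3]$ at some vertex $x$. The relative isoperimetric inequality at $x$ then yields $\gtrsim D^{n-1}$ edges of $B(x,\lambda D)$ leaving $C_1$, each with an endpoint in $Z^{+L}$. This gives $|Z\cap B(x,\lambda D+L)|\gtrsim D^{n-1}/|B(0,L)|$, and since $D$ is arbitrary, growth of degree $n-1$.
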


\paragraph{Trees of spaces.} The first step of our proof of Theorem~\ref{thm:BigIntro} is, given a right-angled Artin group $A(\Gamma)$, to fix a complete-cut-decomposition of $\Gamma$, to decompose $A(\Gamma)$ accordingly as a graph of groups, and to think geometrically of $A(\Gamma)$ as the corresponding tree of spaces. Below, we fix our definition of tree of spaces for clarity, and we record a couple of preliminary results.

\begin{definition}
Let $X$ be a metric space. A decomposition $(T, (V_s)_{s \in V(T)}, (E_a)_{a \in E(T)})$ of $X$ as a \emph{tree of spaces} is the data of a tree $T$ and two collections of connected subspaces $(V_s)_{s \in V(T)}$ and $(E_a)_{a \in E(T)}$ respectively indexed by the vertex- and edge-sets of $T$ such that, for every edge $\{r,s\} \in E(T)$, $V_r \cap V_s = E_{\{r,s\}}$. 
\end{definition}

\noindent
We start by observation that:

\begin{lemma}\label{lem:SepTreeSpaces}
Let $X= (T, (V_s)_{s \in V(T)}, (E_a)_{a \in E(T)})$ be a tree of spaces and $Y \leq X$ a coarsely connected subspace. Fix an $s \in V(T)$ an assume that $V_s$ is not contained in the neighbourhood of $E_a$ for some edge $a \in E(T)$ with $s$ as an endpoint. If $Y$ is not coarsely separated by $\{ E_a \mid s \text{ is an endpoint of } a\}$, then 
\begin{itemize}
	\item either $Y$ is contained in a neighbourhood of $V_s$, 
	\item or there exists an edge $a \in E(T)$ having $s$ as an endpoint such that $E_a$ coarsely separates $Y$ and~$V_s$. 
\end{itemize}
\end{lemma}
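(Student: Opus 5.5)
The plan is to exploit the tree structure. Removing the edge $a$ from $T$ splits $T$ into two halfspaces, and for an edge $a=\{s,t\}$ I write $X_a$ for the union of the vertex-spaces $V_r$ with $r$ in the halfspace containing $t$. Then $X = V_s \cup \bigcup_{a \ni s} X_a$. Moreover, any path in $X$ from a point of $X_a$ to a point outside $X_a$ must pass through $E_a$, since $X_a \cap (X\setminus X_a)$ is controlled by $V_s\cap V_t=E_a$.

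\textbf{Step 1: reduce to the case where $Y$ escapes $V_s$ through a single edge.}
\begin{itemize}
\item Suppose $Y$ is not contained in any neighbourhood of $V_s$. Then for every $D$ there is a point $y_D\in Y$ at distance $\geq D$ from $V_s$, lying in some $X_{a_D}$.
\item Suppose first that infinitely many distinct edges $a_D$ occur, or more generally that $Y$ has deep points in two different branches $X_a$ and $X_b$. Since $E_a\subset V_s$, points deep in $X_a$ are also far from every $E_c$ with $c\neq a$.
\item I then use coarse connectivity of $Y$: the chain connecting two deep points in different branches must cross $V_s$. With $L$ fixed (the coarse connectivity constant), I expect this to show that $Y\setminus E_a^{+L}$ has two deep components, namely the part in $X_a$ and the part outside. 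This would contradict the hypothesis that $Y$ is not coarsely separated by the family $\{E_c\}$.
\item The care needed here is to produce, for every $D$, one single $E_c$ with two components of $Y\setminus E_c^{+L}$ both at distance $\geq D$. The part outside $X_a$ must be deep with respect to $E_a$, not just with respect to $V_s$.
\end{itemize}

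\textbf{Step 2: show $E_a$ separates $Y$ from $V_s$.}
After Step 1, there is essentially one edge $a$ such that $Y$ is far from $V_s$ only inside $X_a$, up to a bounded neighbourhood. I then intend to show that $E_a$ coarsely separates $Y$ and $V_s$: some neighbourhood $E_a^{+L}$ disconnects $X$ with a deep component meeting $Y$ and a deep component meeting $V_s$. This is where the hypothesis that $V_s$ is not contained in a neighbourhood of $E_a$ is used. It provides points of $V_s$ arbitrarily far from $E_a$, and these lie on the side of $X\setminus X_a$. Meanwhile $Y$ has points arbitrarily far from $V_s\supset E_a$ inside $X_a$. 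Since every path between the two sides crosses $E_a$, the second alternative follows.

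\textbf{Main obstacle.} The hardest step is the bookkeeping in Step 1. One must pass from "deep points of $Y$ in different branches, or a deep part both inside and outside $X_a$" to an honest coarse separation of $Y$ by a single member of the family with a uniform $L$. My first attempt would be to take $L$ equal to the coarse connectivity constant of $Y$ plus the constant of the hypothesis. I would then argue by contradiction, choosing for each $D$ the edge whose branch contains a point of $Y$ at distance $\geq 2D$ from $V_s$.
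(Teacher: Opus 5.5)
Your proposal is correct and takes essentially the same route as the paper's proof. You first show that the points of $Y$ far from $V_s$ eventually lie in a single branch, since otherwise two such points in different branches, separated by the corresponding edge-space, give a coarse separation of $Y$ by $\{E_a\}$. You then combine points of $V_s$ far from that $E_a$ with the fact that $E_a$ separates the branch from $V_s$. Your remark about $Y$ having a deep part both inside and outside $X_a$ is the paper's observation that $Y$ must lie in a neighbourhood of the union of the vertex-spaces of that branch; this is the form of the conclusion the proof of Theorem~\ref{thm:BigIntro} later relies on.
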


\begin{proof}
We start by introducing some notation. In $T$, let $\{a_i \mid i \in I\}$ denote the edges having $s$ as an endpoint. For short, we denote by $A_i$ the edge-space indexed by $a_i$ for every $i \in I$. Removing the vertex $s$ from $T$ yields branches indexed by $I$; we denote by $B_i$ the branch delimited by $a_i$ for every $i \in I$. Finally, for every $i \in I$, let $B_i^+ \subset X$ denote the union of all the vertex-spaces indexed by the vertices of $B_i$. 

\medskip \noindent
Assume that $Y$ is not contained in a neighbourhood of $V_s$. So, for every $n \geq 1$, there exists $y_n \in Y$ such that $d(y_n,V_s) \geq n$. Notice that there exists some $i \in I$ such that $y_n \in B_i^+$ for all but finitely many $n$. 

\medskip \noindent
Otherwise, we would find, for every $L \geq 0$, two distinct indices $i \neq j$ in $I$ and two integers $n,m \geq L$ such that $y_n \in B_i^+$ and $y_m \in B_j^+$. Clearly, $y_n$ and $y_m$ are separated by $E_i$ and the balls $B(y_n,L-1)$ and $B(y_m,L-1)$ are disjoint from $E_i$. This implies that $Y$ is coarsely separated by $\{ A_i \mid i \in I\}$, contrary to our assumption.

\medskip \noindent
Now, we claim that $A_i$ coarsely separates $Y$ from $V_s$. Because $Y$ is connected and contains points in $B_i^+$ arbitrarily far away from $V_s$, and a fortiori from $A_i \subset V_s$, we know that $Y$ must be contained in a neighbourhood of $B_i^+$, since otherwise $Y$ would be coarsely separated by $A_i$ (and a fortiori by $\{ A_i \mid i \in I\}$). On the other hand, since $V_s$ is not contained in a neighbourhood of $A_i$ by assumption, we know that it contains points arbitrarily far away from $A_i$. Thus, due to the fact that $A_i$ separates $B_i^+\backslash V_s$ and $V_s\backslash A_i$, we conclude that $A_i$ coarsely separates $Y$ and $V_s$, as desired.
\end{proof}

\noindent
We will also need the following statement, proved in our previous work \cite{CoarseRAAG}.

\begin{prop}[\cite{CoarseRAAG}]\label{prop:SepTreeSpaces}
Let $X$ be a tree of spaces and $Y \leq X$ a subspace. If $Y$ coarsely separates $X$, then either it coarsely separates some vertex-space or it contains some edge-space in a neighbourhood. 
\end{prop}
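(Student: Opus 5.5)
The plan is to argue by contraposition. Assume that $Y$ coarsely separates no vertex-space $V_s$ and that no edge-space $E_a$ lies in a neighbourhood of $Y$; I will show that $Y$ does not coarsely separate $X$. Concretely, fix $L \geq 0$. I want a constant $D$ such that any two points $x,y \in X \backslash Y^{+L}$ with $d(x,Y),d(y,Y) \geq D$ are joined by a path avoiding $Y^{+L}$. Such a $D$ shows that at most one component of $X \backslash Y^{+L}$ is deep. Throughout, "coarsely separates $V_s$" is read with neighbourhoods taken in $X$, as in the definition recalled above applied to the connected subspace $V_s$ and the family $\{Y\}$. I will also use that $d_{V_s} \geq d_X$ on $V_s$.

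First I extract the local information. Since $Y$ does not coarsely separate $V_s$, for our $L$ there is $D_s$ such that $V_s \backslash Y^{+L}$ has at most one connected component containing points at distance $\geq D_s$ from $Y$. Call this component the \emph{deep part} $\Omega_s$, when it exists. The point that needs care is uniformity: $D$ should not depend on $s$. For the trees of spaces arising from graphs of groups over a finite graph, which is the only case used for $A(\Gamma)$, there are finitely many vertex-spaces up to the isometric action. However $Y$ is not invariant, so this alone does not give uniformity. I would therefore organise the argument so that only the finitely many vertex-spaces visited by one fixed path are used. This is legitimate because the conclusion must only hold for each given pair $x,y$: one first shows that every deep point lies in the same component as a chosen base deep point.

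Next comes the chaining step. Let $x \in V_{s_0}$ and $y \in V_{s_k}$, and let $s_0,\ldots,s_k$ be the geodesic from $s_0$ to $s_k$ in $T$, with edges $a_i=\{s_{i-1},s_i\}$. Since $E_{a_i}$ is not contained in any neighbourhood of $Y$, it contains points $z_i$ arbitrarily far from $Y$. Such a $z_i$ lies in $V_{s_{i-1}} \cap V_{s_i}$, so it belongs to both $\Omega_{s_{i-1}}$ and $\Omega_{s_i}$. Hence $\Omega_{s_0} \cup \cdots \cup \Omega_{s_k}$ lies in a single component of $X \backslash Y^{+L}$.

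It remains to place $x$ in $\Omega_{s_0}$ and $y$ in $\Omega_{s_k}$. If $d_X(x,Y) \geq D_{s_0}$, then $x \in V_{s_0} \backslash Y^{+L}$ is a point far from $Y$, so it lies in $\Omega_{s_0}$; the same holds for $y$. To avoid needing a uniform bound on $D_s$, I would proceed as follows. Suppose two distinct components $U_1,U_2$ of $X \backslash Y^{+L}$ are both deep. Pick $x_n \in U_1$ and $y_n \in U_2$ with $d(\cdot,Y) \to \infty$, and fix one pair $x_1 \in V_{s}$, $y_1 \in V_{t}$ with $d(x_1,Y),d(y_1,Y) \geq \max(D_s,D_t)$. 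This is possible because each of $U_1,U_2$ meets the finitely many relevant spaces arbitrarily deeply. If that fails, I would refine the choice using a deep point in the vertex-space of the first far point. The chaining then forces $U_1 = U_2$, a contradiction.

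The main obstacle I expect is exactly this uniformity and bookkeeping. One must make sure that a point far from $Y$ in the ambient metric is deep in its own vertex-space for the relevant $D_s$. One must also make sure that the sequences witnessing deepness of $U_1$ and $U_2$ can be taken inside vertex-spaces whose thresholds are controlled. If needed, I would add the mild hypothesis, satisfied by the Bass--Serre trees of spaces of finite graphs of groups, that the vertex-spaces are uniformly coarsely connected and that the separation constants can be chosen uniformly.
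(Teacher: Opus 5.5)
Your contrapositive set-up and the chaining step are sound. Suppose that for each $L$ there is a single threshold $D(L)$ such that every $V_s\setminus Y^{+L}$ has at most one component reaching distance $D(L)$ from $Y$. Then your argument (put $x$ in $\Omega_{s_0}$ and $y$ in $\Omega_{s_k}$, and link consecutive $\Omega_{s_i}$ through points of $E_{a_i}$ far from $Y$) shows that $X\setminus Y^{+L}$ has at most one component reaching distance $D(L)$. So $Y$ does not coarsely separate $X$.

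The gap is exactly the uniformity you flag, and your workaround does not close it. Nothing forces the deep points of a component of $X\setminus Y^{+L}$ to be deep inside finitely many vertex-spaces. They can be spread over infinitely many $V_s$, each entered only to depth below $D_s$. In fact, for an arbitrary subspace $Y$ and the non-uniform reading of ``coarsely separates some vertex-space'', the statement fails. Let $T$ be a ray and $V_n=[n,n+1]\times[-n,\infty)$ for $n\geq 0$ (or the corresponding subgraph of the grid), so that $E_{\{n,n+1\}}=\{n+1\}\times[-n,\infty)$. Let $Y=[0,\infty)\times\{0\}$. No edge-space lies in a neighbourhood of $Y$. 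Each $V_n\setminus Y^{+L}$ consists of one deep piece and one piece of depth at most $n$, so $Y$ coarsely separates no $V_n$. Yet $\{y>L\}$ and $\{y<-L\}$ are two components of $X\setminus Y^{+L}$, and both are deep: the point $(n,-n)$ is at distance $n$ from $Y$. Your fallback also changes the notion. The paper's definition only asks, for each $D$, for two components reaching distance $D$, not for two fixed deep components $U_1,U_2$, so excluding the latter would not suffice anyway.

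So the uniformity must come from $Y$, not from $X$; it is not a property of Bass--Serre trees of spaces, as you suggest. There are two ways to supply it.
\begin{itemize}
\item Read the first alternative uniformly: there is $L$ such that for every $D$ some $V_s\setminus Y^{+L}$ has two components reaching distance $D$ from $Y$. Its negation is precisely the uniform threshold $D(L)$, and your argument then proves the statement verbatim.
\item Use invariance, as in the only situation where the paper applies the proposition, where $Y=C$ is a subgroup of $A(\Gamma)$. Contrary to your remark, $Y$ is invariant under $C$ itself: $C$ acts isometrically, fixes $Y$ and permutes the vertex-spaces $g\langle V_s\rangle$, so the thresholds can be chosen constant on $C$-orbits. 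A coset $g\langle V_s\rangle$ meets $C^{+L}$ only if it equals $cb\langle V_s\rangle$ with $c\in C$ and $|b|\leq L$. Hence only finitely many $C$-orbits of vertex-spaces meet $Y^{+L}$, and every other vertex-space is disjoint from $Y^{+L}$, hence is its own unique component. Taking the maximum over these finitely many orbits gives $D(L)$, and your chaining argument then completes the proof.
\end{itemize}
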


\section{Proof of Theorem~\ref{thm:BigIntro}}

\noindent
In order to prove Theorem~\ref{thm:BigIntro}, we start by extracting from \cite{CoarseRAAG} a weak version of our theorem:

\begin{thm}\label{thm:AbSepRAAG}
Let $\Gamma$ be a finite graph. The following assertions are equivalent:
\begin{itemize}
	\item[(i)] $A(\Gamma)$ contains a coarsely separating abelian subgroup;
	\item[(ii)] $A(\Gamma)$ virtually splits over an abelian subgroup;
	\item[(iii)] $A(\Gamma)$ splits over an abelian subgroup;
	\item[(iv)] $\Gamma$ contains a separating complete subgraph or is complete.
\end{itemize}
\end{thm}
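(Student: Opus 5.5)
We prove the cycle of implications (iv)$\Rightarrow$(iii)$\Rightarrow$(ii)$\Rightarrow$(i)$\Rightarrow$(iv). Most of the content is imported from \cite{CoarseRAAG} and \cite{MR3728497}; the new input is to check that the conventions match, in particular the degenerate case of a complete graph.

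\textbf{(iv)$\Rightarrow$(iii).} Let $\Lambda \leq \Gamma$ be a separating complete subgraph. Write $\Gamma = \Gamma_1 \cup_\Lambda \Gamma_2$, where $\Gamma_1,\Gamma_2$ properly contain $\Lambda$. The presentation of $A(\Gamma)$ then gives
$$A(\Gamma)=A(\Gamma_1)\ast_{\langle\Lambda\rangle}A(\Gamma_2),$$
a non-trivial amalgam over the free abelian group $\langle \Lambda \rangle \cong \mathbb{Z}^{|V(\Lambda)|}$. If instead $\Gamma$ is complete with $k \geq 1$ vertices, then $A(\Gamma)=\mathbb{Z}^k=\mathbb{Z}^{k-1}\ast_{\mathbb{Z}^{k-1}}$ is an HNN extension with trivial twisting. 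This is a non-trivial splitting over an abelian subgroup. The degenerate case $k=0$ has to be handled by convention.

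\textbf{(iii)$\Rightarrow$(ii).} This is immediate.

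\textbf{(ii)$\Rightarrow$(i).} Suppose a finite-index subgroup $G' \leq A(\Gamma)$ splits non-trivially over an abelian subgroup $H$. Then $H$ is a codimension-one subgroup of $G'$. Concretely, the preimage in the Bass--Serre tree picture of a neighbourhood of $H$ separates the Cayley graph of $G'$ into two deep components. Since $G'$ is quasi-isometric to $A(\Gamma)$ via the inclusion, and coarse separation is a quasi-isometry invariant, $H$ coarsely separates $A(\Gamma)$.

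\textbf{(i)$\Rightarrow$(iv).} This is the main step and the expected obstacle; it is exactly the content of \cite{CoarseRAAG}. The plan is as follows.
\begin{enumerate}
\item Fix a complete-cut-decomposition of $\Gamma$ using Proposition~\ref{prop:CCD}.
\item Realize $A(\Gamma)$ as the corresponding tree of spaces. The vertex-spaces are cosets of $\langle V_s \rangle$, and the edge-spaces are cosets of the abelian groups $\langle V_r \cap V_s \rangle$.
\item Let $H$ be a coarsely separating abelian subgroup. If $\Gamma$ has a separating complete subgraph, (iv) already holds. So assume the decomposition is trivial, that is, $\Gamma$ has no complete cut.
\item Suppose in addition that $\Gamma$ is not complete. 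The task is then to show that no abelian subgroup coarsely separates $A(\Gamma)$.
\end{enumerate}
For the last point I would invoke the result of \cite{CoarseRAAG} that for a graph without complete cut that is not complete, the group $A(\Gamma)$ is not coarsely separated by any family of subexponential-growth subspaces. Abelian subgroups have polynomial growth, so this gives the required contradiction. The complete case is exactly the exception allowed in (iv). The care needed is mainly to confirm that the statements in \cite{CoarseRAAG} (and \cite{MR3728497} for the algebraic direction) match these definitions of coarse separation and of complete cut.
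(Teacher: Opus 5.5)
Your proposal is correct and follows the same route as the paper: the explicit amalgam and HNN splittings give (iv)$\Rightarrow$(iii), the implications (iii)$\Rightarrow$(ii)$\Rightarrow$(i) are routine (your codimension-one and quasi-isometry-invariance justification is fine), and (i)$\Rightarrow$(iv) is taken directly from the main result of \cite{CoarseRAAG}. The complete-cut-decomposition and tree-of-spaces steps in your plan are superfluous, because once you reduce to the case where $\Gamma$ has no complete cut, you simply invoke that cited result, exactly as the paper does.
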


\begin{proof}
If $\Gamma$ is complete, then $A(\Gamma)$ is free abelian, say of rank $n \geq 1$, and it splits as an HNN extension over a free abelian subgroup of rank $n-1$. And, if $\Gamma$ contains a separating complete subgraph $\Lambda$, then $A(\Gamma)$ clearly splits over $\langle \Lambda \rangle$, which is a abelian. Thus, the implication $(iv) \Rightarrow (iii)$ holds. The implications $(iii) \Rightarrow (ii) \Rightarrow (i)$ are clear. The implication $(i) \Rightarrow (iv)$ is a particular case of the main result of \cite{CoarseRAAG}. 
\end{proof}

\noindent
Given a right-angled Artin group $A(\Gamma)$ defined by a graph $\Gamma$, it is clear that, if $\Lambda$ is a separating subgraph of $\Gamma$, then $A(\Gamma)$ splits over $\langle \Lambda \rangle$. More precisely, we can describe $\Gamma$ as the union of two subgraphs containing $\Lambda$ properly, say $\Gamma_1$ and $\Gamma_2$, amalgamated along $\Lambda$. Then, it is clear that $A(\Gamma)$ splits as the amalgamated product $\langle \Gamma_1 \rangle \ast_{\langle \Lambda \rangle} \langle \Gamma_2 \rangle$. Our next lemma exhibits another source of splittings. 

\begin{lemma}\label{lem:NonObviousSplit}
Let $\Gamma$ be a graph and $u \in V(\Gamma)$ a vertex satisfying $\mathrm{star}(u) \neq \Gamma$. The group $A(\Gamma)$ splits over a subgroup isomorphic to $\langle \mathrm{star}(u) \rangle$.
\end{lemma}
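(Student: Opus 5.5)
We plan to realise $A(\Gamma)$ as an HNN extension with stable letter $u$. Let $\Gamma'$ denote the subgraph of $\Gamma$ induced by $V(\Gamma)\setminus\{u\}$, and set $\Lambda := \mathrm{link}(u)$, a subgraph of $\Gamma'$. In the presentation of $A(\Gamma)$, the only relations involving $u$ are $[u,v]=1$ for $v \in V(\Lambda)$. Hence
$$A(\Gamma) = \langle \Gamma' \rangle \ast_{\langle \Lambda \rangle},$$
the HNN extension of $\langle \Gamma' \rangle = A(\Gamma')$ in which the stable letter $u$ conjugates $\langle \Lambda \rangle$ to itself via the identity. We also use the standard fact that $\langle \Lambda \rangle \cong A(\Lambda)$ embeds in $A(\Gamma')$ as a special subgroup, so the HNN extension is well defined.

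This splitting is over $\langle \mathrm{link}(u)\rangle$, not over $\langle \mathrm{star}(u)\rangle$. To fix this, we pass to a different decomposition in which $u$ lies in the edge group. The cleanest approach is the amalgam $A(\Gamma) = \langle \Gamma' \rangle \ast_{\langle \Lambda \rangle} \langle \mathrm{star}(u) \rangle$. This holds since $\Gamma = \Gamma' \cup \mathrm{star}(u)$ with intersection $\Lambda$, but it is again over $\langle \Lambda \rangle$. Instead, we double along $u$. Consider the index-two subgroup $K$ given as the kernel of $A(\Gamma) \to \mathbb{Z}/2$, which sends $u \mapsto 1$ and every other vertex to $0$. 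By Reidemeister--Schreier, $K$ is generated by $u^2$, $\Gamma'$, and $u\Gamma'u^{-1}$, and $K \cong A(\Delta)$, where $\Delta$ is obtained by gluing two copies of $\Gamma'$ along $\Lambda$ and coning off $\Lambda$ with the vertex $u^2$. This is exactly the mechanism behind the example $\Gamma_1,\Gamma_2$ in the introduction. However, we want a splitting of $A(\Gamma)$ itself, not of a finite-index subgroup.

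We therefore use a direct construction. Set $G_1 := \langle \Gamma' \rangle \ast_{\langle \Lambda\rangle} \langle \mathrm{star}(u)\rangle$, realised as the amalgam of $A(\Gamma')$ with $\langle \Lambda \rangle \times \langle u \rangle$. Then $A(\Gamma)$ is an HNN extension of $A(\Gamma) \ast_{\langle \mathrm{star}(u)\rangle} A(\Gamma)$? That is circular. Instead, we take the vertex group to be $P := \langle \Gamma' \rangle \ast_{\langle\Lambda\rangle} (\langle \Lambda\rangle \times \langle u^2\rangle)$, which is isomorphic to $A(\Gamma)$ with $u$ renamed $u^2$, so $\langle \Lambda \rangle \times \langle u^2 \rangle \cong \langle \mathrm{star}(u)\rangle$. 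Let $t := u$ act as an HNN stable letter on the subgroup $\langle \Lambda,u^2\rangle$ by the identity. The resulting group $\langle P, t \mid t x t^{-1}=x,\ x \in \langle \Lambda, u^2\rangle\rangle$ has generators $\Gamma'$, $u^2$, and $t$. The relation $u^2 = t^2$ is missing, so this group is not $A(\Gamma)$, and the construction must be amended. The right amendment is an amalgam: $A(\Gamma) = P \ast_{\langle \Lambda, u^2\rangle} \langle \Lambda, u\rangle$, where $\langle \Lambda,u^2\rangle \leq \langle \Lambda,u\rangle = \langle \mathrm{star}(u)\rangle$ has index $2$. The presentations agree: $P$ contributes $\Gamma'$ and $u^2$ with their relations, and $\langle\Lambda,u\rangle$ adds $u$, identifying its square with $u^2$. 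This splitting is non-trivial because the edge group has index $2$ in one factor. In the other factor, $\langle \Lambda, u^2\rangle \cong \langle \mathrm{star}(u)\rangle$ is a proper subgroup of $P \cong A(\Gamma)$, since $\mathrm{star}(u) \neq \Gamma$. The edge group is isomorphic to $\langle \mathrm{star}(u)\rangle$, as required.

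The main obstacle is verifying the presentation identity rigorously: checking that $P \cong A(\Gamma)$ contains $\langle \Lambda, u^2\rangle$ injectively as the special subgroup $\langle \mathrm{star}\rangle$, and that the amalgam presentation reduces to that of $A(\Gamma)$ by Tietze moves, eliminating the generator named $u^2$ in $P$. The remaining points are that the splitting is non-trivial on both sides and that it is over the subgroup $\langle \Lambda,u^2\rangle \simeq \langle \mathrm{star}(u)\rangle$.
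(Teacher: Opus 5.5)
Your final construction, $A(\Gamma) = P \ast_{\langle \Lambda, u^2\rangle} \langle \Lambda, u\rangle$ with the edge group embedded with index two in $\langle \mathrm{star}(u)\rangle$ and as the standard special subgroup of the copy $P \cong A(\Gamma)$, is exactly the paper's amalgam of $A(\mathrm{star}(u_1))$ (via $u \mapsto u_1^2$) with $A(\Gamma_2)$, checked by the same Tietze elimination and with the same non-triviality argument. It is correct; the abandoned HNN attempts and the finite-index digression should simply be cut from the write-up.
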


\begin{proof}
Let $\Gamma_1$ and $\Gamma_2$ be two copies of $\Gamma$. For every vertex $v \in V(\Gamma)$, we denote by $v_1$ (resp.\ $v_2$) the copy of $v$ in $\Gamma_1$ (resp.\ $\Gamma_2$). We amalgamate the right-angled Artin groups $A(\mathrm{star}(u_1))$ and $A(\Gamma_2)$ along the subgroup $\langle \mathrm{star}(u) \rangle$ through the embeddings given by 
$$\varphi_1 : v \in V(\mathrm{star}(u)) \mapsto \left\{ \begin{array}{l}   v_1^2 \text{ if } v=u \\  v_1 \text{ otherwise} \end{array} \right. \text{ and } \varphi_2 : v \in V(\mathrm{star}(u)) \mapsto v_2.$$
Notice that $\varphi_1$ is not surjective (its image does not contain $u_1$) and neither is $\varphi_2$ as soon as $\Gamma \neq \mathrm{star}(u)$, so our splitting is not trivial. Our amalgamate admits as a presentation
\small$$\left\langle \begin{array}{c} v_1 \ (v \in V(\mathrm{star}(u))) \\ v_2 \ (v \in V(\Gamma)) \end{array} \mid \begin{array}{c} \left[v_1,w_1\right]=1 \text{ if } \{v,w\} \in E(\mathrm{star}(u)) \\ \left[v_2,w_2\right]=1 \text{ if } \{v,w\} \in E(\Gamma) \end{array}, \begin{array}{c} u_1^2=u_2 \\ v_1=v_2 \ (v \in V(\mathrm{link}(u))) \end{array} \right\rangle.$$\normalsize
By using the rightmost relations, we can remove from the presentation the generators $v_2$ for $v \in V(\mathrm{star}(u))$, which yields:
\small$$\left\langle \begin{array}{c} v_1 \ (v \in V(\mathrm{star}(u))) \\ v_2 \ (v \in V(\Gamma \backslash \mathrm{star}(u))) \end{array} \mid \begin{array}{c} \left[v_1,w_1\right] \text{ if } v,w \in V(\mathrm{star}(u)) \text{ adjacent} \\ \left[v_2,w_2\right]=1 \text{ if } v,w \in V(\Gamma \backslash \mathrm{star}(u)) \text{ adjacent} \\ \left[v_1,w_2\right]=1 \text{ if } v \in V(\mathrm{link}(u)) \text{ and } w \in V(\Gamma \backslash \mathrm{star}(u)) \text{ adjacent} \\ \left[u_1^2,v_1\right] = 1 \text{ for every } v \in V(\mathrm{link}(u)) \end{array} \right\rangle.$$\normalsize
Notice that the relations $[u_1^2,v_1]=1$ for $v \in V(\mathrm{link}(u))$ are consequences of the relations $[v_1,w_1]=1$ for adjacent $v,w \in V(\mathrm{star}(u))$, so they can be removed from the presentation. Then, our presentation is just the canonical presentation of $A(\Gamma)$. 
\end{proof}

\noindent
Before turning to the proof of Theorem~\ref{thm:BigIntro}, we need a last elementary observation:

\begin{lemma}\label{lem:GeomCont}
Let $G$ be a finitely generated group, $g \in G$ an element, and $H,K \leq G$ two subgroups. If $gH$ is contained in a neighbourhood of $K$, then $gHg^{-1}$ is virtually contained in $K$.
\end{lemma}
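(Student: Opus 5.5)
We prove the lemma with a standard finiteness (pigeonhole) argument on the cosets of $K$ met by $gH$. Fix a finite generating set of $G$ and the associated word metric. By assumption there is some $R \geq 0$ such that every element of $gH$ lies within distance $R$ of $K$. That is, for every $h \in H$ there exist $k_h \in K$ and $b_h \in G$ with $|b_h| \leq R$ and
$$g h = k_h b_h.$$
Because the ball of radius $R$ in $G$ is finite, the elements $b_h$ range over a finite set $B$.

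Next we partition $H$ according to the value of $b_h$. For $b \in B$, let $H_b := \{ h \in H \mid gh \in Kb \}$, so that $H = \bigcup_{b \in B} H_b$.

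The key observation compares two elements of the same piece. Suppose $h, h' \in H_b$. Then $gh (gh')^{-1} \in Kb b^{-1} K = K$, that is, $g h h'^{-1} g^{-1} \in K$. Hence $H_b h'^{-1} \subseteq g^{-1} K g \cap H$ for any fixed $h' \in H_b$. In other words, each nonempty $H_b$ is contained in a single right coset $(H \cap g^{-1} K g) h'$ of $H \cap g^{-1}Kg$ in $H$.

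Since $H$ is covered by the finitely many sets $H_b$, it is covered by at most $|B|$ right cosets of $H \cap g^{-1}Kg$. So this subgroup has finite index in $H$. Conjugating by $g$, the subgroup $gHg^{-1} \cap K = g(H \cap g^{-1}Kg)g^{-1}$ has finite index in $gHg^{-1}$. This is exactly the statement that $gHg^{-1}$ is virtually contained in $K$.

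No step here is a genuine obstacle. The only point needing care is the side on which cosets are taken: the neighbourhood condition $d(gh, K) \leq R$ must be translated as $gh \in KB$ (left multiplication by $K$, right by a bounded element), which matches the left-invariance of the word metric.
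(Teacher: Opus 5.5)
Your proof is correct and is essentially the paper's argument. The paper notes that the left cosets $xK$ whose $D$-neighbourhood contains $gH$ all meet the finite ball $B(g,D)$, so they form a finite $gHg^{-1}$-invariant set, and it takes the kernel of this action; you instead pigeonhole the bounded error terms $b_h$ directly, which is the same finiteness-of-balls mechanism with left-invariance of the word metric, and it has the small bonus of giving $gHg^{-1}\cap K$ itself with the explicit bound $[gHg^{-1} : gHg^{-1}\cap K] \le |B|$.
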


\begin{proof}
Let $D \geq 0$ be such that $gH$ is contained in the $D$-neighbourhood of $K$ and let 
$$\mathcal{C}: = \{ \text{coset } hK \mid gH \subset D\text{-neighbourhood of } hK\}.$$
Notice that the cosets in $\mathcal{C}$ are pairwise disjoint and they all intersect the ball $B(g,D)$ in $G$. Therefore, $\mathcal{C}$ is finite. Since $gHg^{-1}$ stabilises $gH$, necessarily $\mathcal{C}$ is $gHg^{-1}$-invariant. The kernel of the action $gHg^{-1} \curvearrowright \mathcal{C}$ then yields a finite-index subgroup of $gHg^{-1}$ that stabilises $K$, i.e.\ that is contained in $K$. 
\end{proof}

\begin{proof}[Proof of Theorem~\ref{thm:BigIntro}.]
If $\Gamma$ is a complete graph with $n+1$ vertices, then $A(\Gamma)$ is a free abelian group of rank $n+1$, which can also be described as an HNN extension over $\mathbb{Z}^n$. Next, if $\Gamma$ contains a complete subgraph $\Lambda$ with $n$  vertices and with a subgraph $\Lambda_0$ that separates $\Gamma$, then we show that $A(\Gamma)$ splits over $\mathbb{Z}^n$ as follows. Because $\Lambda_0$ separates $\Gamma$, we can described $\Gamma$ as a union of subgraphs $\Gamma_0, \Gamma_1, \ldots, \Gamma_k$ ($k \geq 1$) amalgamated along $\Lambda_0$. We assume without loss of generality that each $\Gamma_i$ contains $\Lambda_0$ properly. Since $\Lambda$ is complete, it must be contained entirely into some $\Gamma_i$, say $\Gamma_0$ up to reindexing the $\Gamma_i$. If $\Lambda \subsetneq \Gamma_0$, then $\Lambda$ separates $\Gamma_0 \backslash \Lambda$ and $\Gamma_1 \backslash \Lambda$, which clearly implies that $A(\Gamma)$ splits over $\langle \Lambda \rangle \simeq \mathbb{Z}^n$. So we can assume that $\Lambda= \Gamma_0$. If $\Lambda_0 \subsetneq \Lambda$, then, given an arbitrary vertex $v \in V(\Lambda \backslash \Lambda_0)$, it follows from Lemma~\ref{lem:NonObviousSplit} that $A(\Gamma)$ splits over a subgroup isomorphic to $\langle \mathrm{star}(v) \rangle = \langle \Lambda \rangle \simeq \mathbb{Z}^n$. If $\Lambda_0=\Lambda$, then $\Lambda$ is separating itself and we deduce that $A(\Gamma)$ again splits over $\langle \Lambda \rangle \simeq \mathbb{Z}^n$, as desired. 

\medskip \noindent
Thus, we have proved the implication $(iv) \Rightarrow (iii)$. The implications $(iii) \Rightarrow (ii) \Rightarrow (i)$ are clear. Our goal now is to prove $(i) \Rightarrow (iv)$. So, from now on, we assume that $A(\Gamma)$ contains a coarsely separating abelian subgroup of rank $n$, say $C$. 

\medskip \noindent
Fix a complete-cut-decomposition $(T, (V_s)_{s \in V(T)})$ of $\Gamma$ as given by Proposition~\ref{prop:CCD}. This induces a decomposition of $A(\Gamma)$ as a graph of groups 
$$\mathcal{G}:= (T, (\langle V_s \rangle)_{s \in V(T)}, (E_{\{r,s\}}:=\langle V_r \cap V_s \rangle)_{\{r,s\} \in E(T)})$$ 
where, for every $\{r,s\} \in E(T)$, the monomorphism $\langle V_r \cap V_s \rangle \hookrightarrow V_s$ is induced by the inclusion map $V_r \cap V_s \hookrightarrow V_s$. 

\medskip \noindent
If our complete-cut-decomposition of $\Gamma$ is trivial, i.e.\ $T$ is reduced to a single vertex, then it follows from Theorem~\ref{thm:AbSepRAAG} that $\Gamma$ is a complete graph. Thus, $A(\Gamma)$ is a free abelian group of rank $|V(\Gamma)|$ containing a coarsely separating subgroup of rank $n$ (namely, $C$), hence $|V(\Gamma)|=n+1$ according to Lemma~\ref{lem:CodimAb}. We conclude, as desired, that $\Gamma$ is a complete graph with $n+1$ vertices. From now on, we assume that our complete-cut-decomposition of $\Gamma$ is non-trivial.

\medskip \noindent
First of all, assume that $C$ does not coarsely separate any $g \langle V_s \rangle$ for $g \in A(\Gamma)$ and $s \in V(T)$. It follows from Proposition~\ref{prop:SepTreeSpaces} that there exist $g \in A(\Gamma)$ and $a \in E(T)$ such that $g \langle E_a \rangle$ is contained in a neighbourhood of $C$. According to Lemma~\ref{lem:GeomCont}, this implies that $g \langle E_a \rangle g^{-1}$ is virtually contained in $C$. Then, we deduce from Proposition~\ref{prop:ArtinSubAbelian} that $E_a$ is (a complete cut) contained in a complete subgraph with $n$ vertices, leading to the desired conclusion.

\medskip \noindent
Next, assume that $C$ does coarsely separate $g \langle V_s \rangle$ for some $g \in A(\Gamma)$ and $s \in V(T)$. Because $V_s$ has no complete cut, it follows from Theorem~\ref{thm:AbSepRAAG} that $V_s$ is a complete graph. It follows from Theorem~\ref{thm:SepEucGrowth} that $V_s$ has $\leq n+1$ vertices. 

\noindent
\begin{minipage}{0.4\linewidth}
\includegraphics[width=0.9\linewidth]{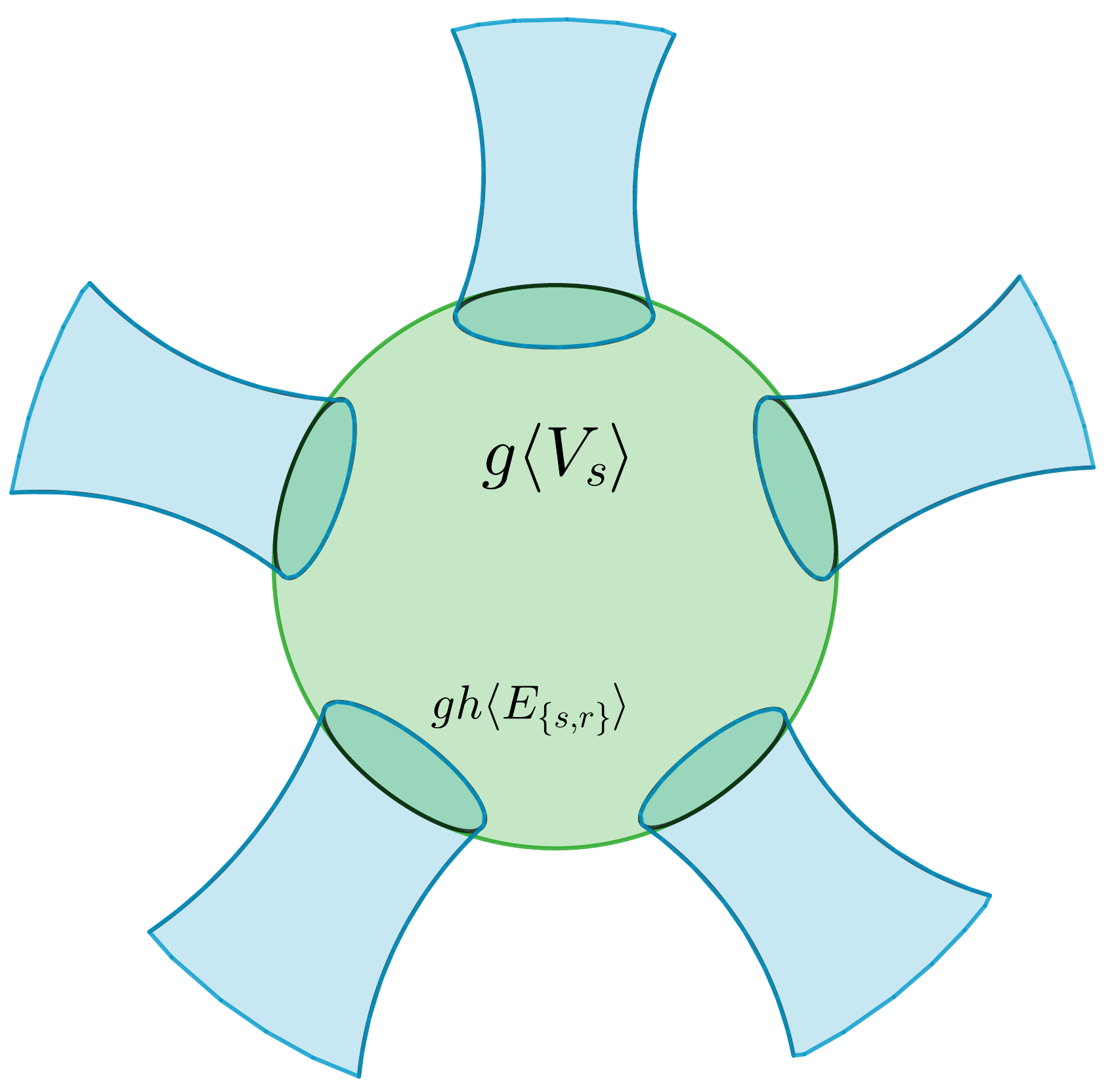}
\end{minipage}
\begin{minipage}{0.58\linewidth}
Now, we distinguish two cases, depending on whether or not $C$ is coarsely separated by 
$$\mathcal{Z}:= \{ gh E_{\{s,r\}} \mid h \in \langle V_s \rangle, \ r \in V(T) \text{ neighbour of } s\}.$$
Notice that $\mathcal{Z}$ is just the edge-spaces tangent to $g \langle V_s \rangle$ in the graph-of-spaces decomposition of $A(\Gamma)$ given by our graph-of-groups decomposition.
\end{minipage}

\medskip \noindent
If $C$ is coarsely separated by $\mathcal{Z}$, then Theorem~\ref{thm:SepEucGrowth} implies that $\mathcal{Z}$ has growth at least polynomial of degree $n-1$. But $\mathcal{Z}$  has polynomial growth of degree $\max\{ |V(E_{\{s,r\}})| \mid r \text{ neighbour of } s\}$, so there must exist some neighbour $r$ of $s$ such that $E_{\{r,s\}}=V_r \cap V_s$ has at least $n-1$ vertices. But we also know that $V_r \cap V_s$ is properly contained in $V_s$, so it must contain less vertices than $V_s$, hence at most $n$. Thus, $V_r \cap V_s$ has either $n$ or $n-1$ vertices. In the former case, $V_r \cap V_s$ is a complete cut of $\Gamma$ of size $n$, and there is nothing else to prove. And, in the latter case, because $V_s$ contains properly $V_r \cap V_s$, it must be a complete graph with $\geq n$ vertices containing a subgraph with $n-1$ vertices that separates $\Gamma$. Thus, the complete cut $V_r \cap V_s$ is contained in some complete subgraph in $V_s$ with $n$ vertices. 

\medskip \noindent
Now, assume that $C$ is not coarsely separated by $\mathcal{Z}$. According to ~\ref{lem:SepTreeSpaces}, two cases may happen. 

\medskip \noindent
First, $C$ may be contained in a neighbourhood of $g \langle V_s \rangle$. Then, $\langle V_s \rangle$ must have growth at least polynomial of degree $n$, which implies that $V_s$ has $\geq n$ vertices. But we already know that $V_s$ has $\leq n+1$ vertices. So either $V_s$ has $n$ vertices, providing a complete subgraph of size $n$ containing some complete cut of $\Gamma$ (namely, $E_{\{s,r\}}$ for any neighbour $r$ of $s$); or $V_s$ has $n+1$ vertices, in which case, given some neighbour $r$ of $s$, the complete cut $E_{\{s,r\}}$ either has $n$ vertices or is contained in some complete subgraph of $V_s$ with $n$ vertices. 

\medskip \noindent
\begin{minipage}{0.4\linewidth}
\includegraphics[width=0.9\linewidth]{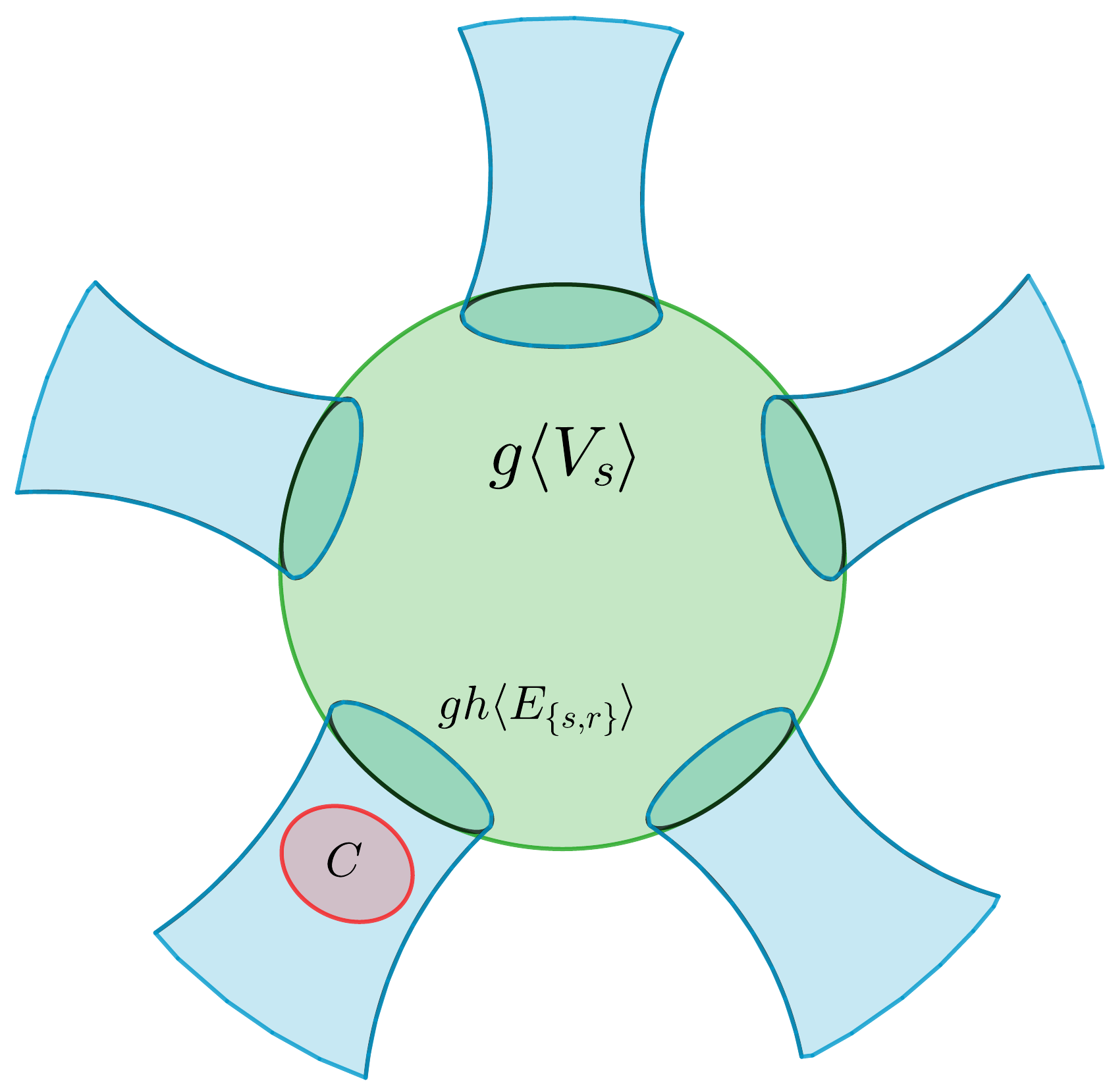}
\end{minipage}
\begin{minipage}{0.58\linewidth}
Second, $C$ and $g \langle V_s \rangle$ may be coarsely separated by $ghE_{\{s,r\}}$ for some $h \in \langle V_s \rangle$ and some neighbour $r$ of $s$. We claim that our complete cut $E_{\{s,r\}}$ is contained ins some complete subgraph of $\Gamma$ with $n$ vertices, which will complete the proof of our theorem. 
\end{minipage}

\medskip \noindent
On the one hand, if $k$ denotes the number of vertices of $V_s$, then we know that $E_{\{s,r\}}$ has $\leq k-1$ vertices. On the other hand, since the coarse intersection between $C$ and $g \langle V_s \rangle$ coarsely separates $\langle g V_s \rangle$, it growth must be at least polynomial of degree $k-1$. Because this coarse intersection is also contained in a neighbourhood of $gh \langle E_{\{s,r\}} \rangle$, it follows that $E_{\{s,r\}}$ must have at least $k-1$ vertices. We conclude that $E_{\{r,s\}}$ has exactly $k-1$ vertices. 

\medskip \noindent
Note that $g\langle V_s \rangle$ decomposes as a product $gh \langle E_{\{r,s\}} \rangle \times \mathbb{Z}$. Since, as already said, $gh \langle E_{\{r,s\}} \rangle$ coarsely contains the coarse intersection betwenn $C$ and $g \langle V_s \rangle$, coarse intersection that coarsely separates $g \langle V_s \rangle$, we deduce from ~\ref{lem:CoarseSepEuclidean} that our coarse intersection is quasi-dense in $gh \langle E_{\{r,s\}}$. As a consequence, $gh \langle E_{\{r,s\}}$ is contained in a neighbourhood of $C$, which implies, according to Lemma~\ref{lem:GeomCont}, that $gh \langle E_{\{r,s\}} \rangle h^{-1}g^{-1}$ is virtually contained in $C$. We conclude thanks to Proposition~\ref{prop:ArtinSubAbelian} that the complete cut $E_{\{r,s\}}$ of $\Gamma$ is contained in a complete subgraph with $n$ vertices. 
\end{proof}

\addcontentsline{toc}{section}{References}

\bibliographystyle{alpha}
{\footnotesize\bibliography{VirtSplitRAAG}}

\Address

%

\end{document}